\definecolor{clemson-orange}{RGB}{234,106,32}
\definecolor{broncos-orange}{RGB}{252,76,2}
\definecolor{cincinnati-red}{RGB}{190,0,0}
\definecolor{pink}{RGB}{255,105,180}
\definecolor{celtics}{RGB}{46,123,59}
\definecolor{leafs-blue}{RGB}{0,58,120}
\definecolor{pure-cyan}{RGB}{0,100,92}
\definecolor{clemson-orange}{RGB}{234,106,32}
\definecolor{chicago-maroon}{RGB}{128,0,0}
\definecolor{northwestern-purple}{RGB}{82,0,99}
\definecolor{sauder-green}{RGB}{171,180,0}
\definecolor{saffron}{RGB}{255, 153, 51}
\newcommand{\bb}{\mathbb}
\newcommand{\R}{\bb R}
\newcommand{\Z}{{\bb Z}}
\newcommand{\N}{{\bb N}}
\newcommand{\Q}{{\bb Q}}
\newcommand{\ch}{Chv\'{a}tal }
\DeclareMathOperator\proj{proj}
\DeclareMathOperator*{\aff}{aff}
\DeclareMathOperator*{\intt}{int}
\DeclareMathOperator*{\intcone}{intcone}
\DeclareMathOperator*{\cc}{cc}
\theoremstyle{definition}
\newtheorem{theorem}{Theorem}[section]
\newtheorem{lemma}[theorem]{Lemma}
\newtheorem*{definition}{Definition}
\newtheorem{remark}[theorem]{Remark}
\newtheorem{example}[theorem]{Example}
\title{Mixed-integer linear representability, disjunctions, and variable elimination}
\author{Amitabh Basu\footnote{Applied Mathematics and Statistics, Johns Hopkins University, USA. A. Basu was supported by the NSF grant CMMI1452820.}
\and Kipp Martin\footnote{Booth School of Business, University of Chicago}
\and Christopher Thomas Ryan\footnotemark[2]
\and Guanyi Wang\footnote{Industrial and Systems Engineering, Georgia Institute of Technology}}
\begin{document}

\maketitle

\begin{abstract}
Jeroslow and Lowe gave an exact geometric characterization of subsets of $\R^n$ that are projections of mixed-integer linear sets, a.k.a MILP-representable sets. We give an alternate algebraic characterization by showing that a set is MILP-representable {\em if and only if} the set can be described as the intersection of finitely many {\em affine \ch inequalities}. These inequalities are a modification of a concept introduced by Blair and Jeroslow.
This gives a sequential variable elimination scheme that, when applied to the MILP representation of a set, explicitly gives the affine \ch inequalities characterizing the set. This is related to the elimination scheme of Wiliams and Williams-Hooker, who describe projections of integer sets using \emph{disjunctions} of \ch systems. Our scheme extends their work in two ways. First, we show that disjunctions are unnecessary, by showing how to find the affine \ch inequalities that cannot be discovered by the Williams-Hooker scheme. Second, disjunctions of \ch systems can give sets that are \emph{not} projections of mixed-integer linear sets; so the Williams-Hooker approach does not give an exact characterization of MILP representability. 
\end{abstract}

\section{Introduction}

Understanding which problems can be modeled as a mixed-integer linear program using additional integer and continuous variables is an important question for the discrete optimization community. More precisely, researchers are interested in characterizing sets that are projections of mixed-integer sets described by linear constraints. Such sets have been termed \emph{MILP-representable sets}; see \cite{vielma2015mixed} for a thorough survey. A seminal result of Jeroslow and Lowe \cite{jeroslow1984modelling} provides a geometric characterization of MILP-representable sets as the sum of a finitely generated monoid, and a disjunction of finitely many polytopes (see Theorem~\ref{theorem:jeroslow-lowe} below for a precise statement). 
An algebraic approach based on explicit elimination schemes for integer variables is considered in \cite{williams-1,williams-2,williams-hooker,balas2011projecting}. This prior work tries to adapt the Fourier-Motzkin elimination approach for linear inequalities to handle integer variables. 
Balas, in \cite{balas2011projecting}, also explores how to adapt Fourier-Motzkin elimination in the case of binary variables. In both instances, there is a need to introduce \emph{disjunctions} of inequalities that involve either rounding operations or congruence relations. We emphasize that the geometric approach of Jeroslow-Lowe and the algebraic approach of Williams-Hooker both require the use of disjunctions. 

Our point of departure is that we provide a constructive algebraic characterization of MILP-representability that does not need disjunctions, but instead makes use of {\em affine \ch inequalities}, i.e. affine linear inequalities with rounding operations (for a precise definition see Definition~\ref{definition:ch-functions} below). We show that MILP-representable sets are exactly those sets that satisfy a finite system of affine \ch inequalities. 
In contrast, Williams and Hooker~\cite{williams-1,williams-2,williams-hooker} require {\em disjunctions} of systems of affine \ch inequalities. Another disadvantage in their work is the following: there exist sets given by disjunctions of affine \ch systems that are {\em not} MILP-representable.  Finally, our proof of the non-disjunctive characterization is constructive 
and implies a sequential variable elimination scheme for mixed-integer linear sets (see Section~\ref{sec:sequential-elim}). 

{\em We thus simultaneously show three things: 1) disjunctions are not necessary for MILP-representability (if one allows affine \ch inequalities), an operation that shows up in both the Jeroslow-Lowe and the Williams-Hooker approaches, 2) our algebraic characterization comes with a variable elimination scheme, which is an advantage, in our opinion, to the geometric approach of Jeroslow-Lowe, and 3) our algebraic characterization is exact, as opposed to the algebraic approach of Williams-Hooker, whose algebraic descriptions give a strictly larger family of sets than MILP-representable sets.}

Using our characterization we resolve an open question posed in Ryan \cite{ryan1991} on the representability of integer monoids. Theorem 1 in \cite{ryan1991} shows that every finitely-generated integer monoid can be described as a finite system of \ch inequalities but leaves open the question of how to construct the associated \ch functions via elimination. Ryan states that the elimination methods of Williams in \cite{williams-1,williams-2} do not address her question because of the introduction of disjunctions. Our work provides a constructive approach for finding a \ch inequality representation of finitely-generated integer monoids using elimination. 

\section{Preliminaries}\label{s:preliminaries}

$\Z, \Q, \R$ denote the set of integers, rational numbers and reals, respectively. Any of these sets subscripted by a plus means the nonnegative elements of that set. For instance, $\Q_+$ is the set of nonnegative rational numbers. The projection operator $\proj_Z$ where $Z \subseteq \left\{x_1, \dots, x_n\right\}$ projects a vector $x \in \R^n$ onto the coordinates in $Z$. Following \cite{jeroslow1984modelling} we say a set $S \in \R^n$ is \emph{mixed integer linear representable} (or MILP-representable) if there exists rational matrices $A, B, C$ and a rational vector $d$ such that 
\begin{align}\label{eq:milp-represented}
S = \proj_x \left\{ (x,y,z) \in \R^n \times \R^p \times \Z^q : Ax + By + Cz \ge d\right\}.
\end{align}

Let $S$ be a finite subset of vectors in $\R^n$. The set of nonnegative integer linear combinations is denoted $\intcone A$. The following is the main result from~\cite{jeroslow1984modelling} stated as Theorem 4.47 in \cite{conforti2014integer}:

\begin{theorem}\label{theorem:jeroslow-lowe}
A set $S \subset \R^n$ is MILP-representable if and only if there exists rational polytopes $P_1, \dots, P_k \subseteq \R^n$ and vectors $r^1, \dots, r^k \in \Z^n$ such that 
\begin{align}\label{eq:jeroslow-lowe-chraracterization}
S = \bigcup_{i=1}^k P_i + \intcone \left\{r^1, \dots, r^t\right\}.
\end{align}
\end{theorem}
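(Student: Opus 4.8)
The plan is to prove both implications, handling the reverse (``if'') direction as a direct construction and reserving the real work for the forward (``only if'') direction. For sufficiency I would exhibit a single MILP whose $x$-projection equals the right-hand side of \eqref{eq:jeroslow-lowe-chraracterization}. Writing each rational polytope as $P_i=\{v:H_iv\le g_i\}$, introduce binaries $\delta_1,\dots,\delta_k$ with $\sum_i\delta_i=1$, disaggregated vectors $x_1,\dots,x_k$, and integer multipliers $\lambda_1,\dots,\lambda_t\in\Z_+$, and impose $x=\sum_i x_i + \sum_j\lambda_j r^j$ together with the perspective constraints $H_i x_i\le g_i\delta_i$. Because each $P_i$ is bounded, $\delta_i=0$ forces $x_i=0$, so choosing $\delta_{i_0}=1$ makes $x\in P_{i_0}+\intcone\{r^1,\dots,r^t\}$; ranging over the choice of $i_0$ recovers the union exactly. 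I expect this direction to need care with the perspective/boundedness argument but no new ideas.

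The forward direction is the heart of the matter. Starting from a representation \eqref{eq:milp-represented}, let $Q=\{(x,y,z): Ax+By+Cz\ge d\}$ be the rational polyhedron obtained by relaxing the integrality of $z$, and set $Q_I = Q\cap(\R^n\times\R^p\times\Z^q)$, so that $S=\proj_x(Q_I)$. By the Minkowski--Weyl theorem in its rational form, $Q=\conv\{u^1,\dots,u^m\}+\cone\{\rho^1,\dots,\rho^s\}$ with all data rational, and after scaling I may take each ray $\rho^j$ to be an \emph{integer} vector. The key step is a ``floor decomposition'' of the conic part: writing any $w\in Q$ as $\sum_i\alpha_i u^i + \sum_j \mu_j\rho^j$ and splitting $\mu_j = \lfloor\mu_j\rfloor+\{\mu_j\}$, one obtains $w = w' + \sum_j\lfloor\mu_j\rfloor\rho^j$, where $w'$ lies in the bounded rational polytope $T := \conv\{u^1,\dots,u^m\}+\{\sum_j\mu_j\rho^j : 0\le\mu_j\le 1\}$ and the remainder is an integer combination of integer rays.

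From here the argument closes quickly. Since each $\rho^j$ has integer $z$-component, the $z$-coordinates of $w$ and of $w'$ differ by an integer vector, so $w\in Q_I$ forces $w'\in T_I:=T\cap(\R^n\times\R^p\times\Z^q)$; conversely $T\subseteq Q$ and $\cone\{\rho^1,\dots,\rho^s\}$ is the recession cone of $Q$, so $T_I+\intcone\{\rho^1,\dots,\rho^s\}\subseteq Q_I$. This yields the exact identity $Q_I = T_I + \intcone\{\rho^1,\dots,\rho^s\}$. Because $T$ is bounded, its points realize only finitely many integer $z$-values $z^1,\dots,z^N$, and each slice $T_\ell=\{(x,y):(x,y,z^\ell)\in T\}$ is a rational polytope, so $T_I=\bigcup_{\ell=1}^N T_\ell\times\{z^\ell\}$. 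Projecting onto $x$ and using that $\proj_x$ is linear, hence commutes with Minkowski sums and with unions, I obtain $S = \bigcup_{\ell=1}^N P_\ell + \intcone\{r^1,\dots,r^s\}$ with $P_\ell := \proj_x(T_\ell\times\{z^\ell\})$ rational polytopes and $r^j := \proj_x(\rho^j)\in\Z^n$, which is precisely \eqref{eq:jeroslow-lowe-chraracterization}.

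The main obstacle, and the place demanding the most care, is the bookkeeping in the forward direction: ensuring that rationality is preserved throughout (so that the rays may be taken integral and the projected slices remain rational polytopes), that $\cone\{\rho^1,\dots,\rho^s\}$ is exactly the recession cone of $Q$ so that adding integer ray combinations keeps points inside $Q$, and that $\proj_x$ sends the integer rays $\rho^j$ to integer monoid generators $r^j$. The floor-decomposition identity $Q_I = T_I + \intcone\{\rho^1,\dots,\rho^s\}$ is the crux; once it is in hand, slicing the bounded set $T$ by its finitely many integer $z$-values and projecting are mechanical.
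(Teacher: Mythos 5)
The paper does not prove this statement; it is quoted as a known result of Jeroslow and Lowe \cite{jeroslow1984modelling}, stated as Theorem 4.47 in \cite{conforti2014integer}. Your argument is correct and is essentially the standard proof from those sources: the ``if'' direction is the usual disjunctive/perspective MILP formulation (using boundedness of each $P_i$ to force $x_i=0$ when $\delta_i=0$), and the ``only if'' direction is the Minkowski--Weyl plus floor-decomposition argument yielding $Q_I = T_I + \intcone\{\rho^1,\dots,\rho^s\}$ and then slicing the bounded set $T$ over its finitely many integral $z$-values before projecting.
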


The ceiling operator $\lceil a \rceil$ gives the smallest integer no less than $a \in \R$. {\em \ch functions}, first introduced by \cite{blair82}, are obtained by taking linear combinations of linear functions and using the ceiling operator. We extend this original definition to allow for affine linear functions, as opposed to homogenous linear functions. Consequently, we term our functions {\em affine \ch functions}. We use the concept of finite binary trees from \cite{ryan1986thesis} to formally define these functions.

\begin{definition}\label{definition:ch-functions}
An affine \ch function $f:\R^n\to \R$ is constructed as follows. We are given a finite binary tree where each node of the tree is either: (i) a leaf, which corresponds to an affine linear function on $\R^n$ with rational coefficients; (ii) has one child with corresponding edge labeled by either a $\lceil \cdot \rceil $ or a number in $\Q_+$, or (iii) has two children, each with edges labelled by a number in $\Q_+$.  

The function $f$ is built as follows. Start at the root node and (recursively) form functions corresponding to subtrees rooted at its children. If the root has a single child whose subtree is $g$, then either (a) $f = \lceil g \rceil $ if the corresponding edge is labeled $\lceil \cdot \rceil $ or (b) $f = \alpha g$ if the corresponing edge is labeled by $a \in \Q_+$.
If the root has two children with corresponding edges labeled by $a \in \Q_+$ and $b \in \Q_+$ then $f = ag + bh$ where $g$  and $h$  are  functions corresponding to the respective children of the root.\footnote{The original definition of \ch function in \cite{blair82} does not employ binary trees. Ryan shows the two definitions are equivalent in~\cite{ryan1986thesis}.}

The \emph{depth} of a binary tree representation $T$ of an affine \ch function is the length of the longest path from the root to a node in $T$, and $\cc(T)$ denotes the {\em ceiling count of $T$}, i.e., the total number of edges of $T$ labeled $\lceil \cdot \rceil$.
\end{definition}

The original definition of {\em \ch function} in the literature requires the leaves of the binary tree to be linear functions, and the domain of the function to be $\Q^n$ (see \cite{ryan1986thesis,blair82,ryan1991}). Our definition above allows for {\em affine} linear functions at the leaves, and the domain of the functions to be $\R^n$. We use the term {\em \ch function}, as opposed to {\em affine} \ch function, to refer to the setting where the leaves are linear functions. In this paper, the domain of all functions is $\R^n$. 


An inequality $f(x) \leq b$, where $f$ is an affine \ch function and $b\in \R$, is called an {\em affine \ch inequality}. 
A \emph{mixed-integer \ch (MIC) set} is a mixed-integer set described by finitely many affine \ch inequalities. That is, a set $S$ is a mixed integer \ch set if there exist affine \ch functions $f_i$ and $b_i \in \R$ for $i = 1, \dots, m$ such that
$S = \{ (x,z) \in \R^n \times \Z^q : f_i(x,z) \le b_i  \text{ for } i = 1, \dots, m \}. $
A set $S$ is a \emph{disjunctive mixed-integer \ch (DMIC) set} if there exist affine \ch functions $f_{ij}$ and $b_{ij} \in \R$ for $i = 1, \dots, m$ and $j = 1, \dots, t$ such that $S = \bigcup_{j = 1}^t \{ (x,z) \in \R^n \times \Z^q : f_{ij}(x,z) \le b_{ij}  \text{ for } i = 1, \dots, m \}.$

\section{MILP-representable sets as DMIC sets}\label{s:cgm}

From the perspective of MILP-representability, the following result summarizes the work in~\cite{williams-2,williams-1,williams-hooker} that relates affine \ch functions and projections of integer variables.

\begin{theorem}\label{theorem:w-h-gives-projection}
Every MILP-representable set is a DMIC set.
\end{theorem}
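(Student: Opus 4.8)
The plan is to prove the statement constructively, by an explicit variable elimination scheme, rather than by appealing to Theorem~\ref{theorem:jeroslow-lowe}. Starting from the representation $S = \proj_x\{(x,y,z)\in\R^n\times\R^p\times\Z^q : Ax+By+Cz\ge d\}$, I would eliminate the auxiliary variables one at a time while maintaining the invariant that the current projection is a DMIC set. The elimination naturally splits into two regimes. First I would remove all continuous variables $y$ by ordinary Fourier--Motzkin elimination: for fixed $(x,z)$ the existence of a real $y$ satisfying the system is a purely polyhedral question, so $\proj_x(S) = \proj_x\{(x,z)\in\R^n\times\Z^q : Gx+Hz\ge f\}$ for some rational $G,H,f$, with no ceilings introduced. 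This reduces the theorem to showing that projecting an integer variable out of a DMIC set again yields a DMIC set, with the base case $\{(x,z)\in\R^n\times\Z^q: Gx+Hz\ge f\}$ being a single-disjunct MIC set whose defining functions are affine linear.

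The core of the argument is the inductive step: given a DMIC set $T\subseteq\R^n\times\Z^q$, show that $\{(x,z_1,\dots,z_{q-1}) : \exists\, z_q\in\Z,\ (x,z)\in T\}$ is again a DMIC set. Since projection distributes over unions, and a finite union of DMIC sets is a DMIC set, it suffices to treat a single conjunctive piece $T_j=\{(x,z): f_i(x,z)\le b_i,\ i=1,\dots,m\}$ and eliminate $z_q$ from it. I would do this in two phases. \emph{Phase A (exposing $z_q$).} The difficulty is that $z_q$ may occur inside ceiling operators. I would remove these by splitting on residue classes: choosing $D\in\N$ so that $\alpha D\in\Z$ for every effective coefficient $\alpha$ of $z_q$ occurring inside an innermost ceiling, I substitute $z_q=Dt+r$ over the disjunction $r\in\{0,\dots,D-1\}$, replacing $z_q$ by a fresh integer variable $t$. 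For such an innermost ceiling $\lceil \alpha z_q+h\rceil$ with $h$ free of $z_q$, one has $\lceil \alpha z_q + h\rceil = \alpha D t + \lceil \alpha r + h\rceil$, which pulls the variable outside the ceiling. This strictly decreases the number of ceilings containing the eliminated variable, so inducting on the ceiling count $\cc$ of the relevant subtrees terminates with a disjunction of MIC sets in which $z_q$ appears only linearly. \emph{Phase B (linear integer elimination).} With $z_q$ appearing linearly, I separate the inequalities into lower bounds $g_k+\gamma_k z_q\le b_k$ ($\gamma_k<0$), upper bounds $f_\ell+\delta_\ell z_q\le b_\ell$ ($\delta_\ell>0$), and those free of $z_q$; an integer $z_q$ exists if and only if $\lceil L_k\rceil\le U_\ell$ for all $k,\ell$, where $L_k=(g_k-b_k)/|\gamma_k|$ and $U_\ell=(b_\ell-f_\ell)/\delta_\ell$.

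A point I would be careful about is that affine \ch functions are closed under nonnegative linear combinations and under ceilings, but \emph{not} under negation (since $-\lceil\cdot\rceil$ is a floor). Hence I would not isolate the upper bound $U_\ell$; instead I would keep the ceiling-bearing part $f_\ell$ of each upper-bound inequality on its original side and write the elimination condition as $\delta_\ell\lceil L_k\rceil + f_\ell \le b_\ell$. Here $L_k$ is a nonnegative scaling of the affine \ch function $g_k$ shifted by a constant, so $\lceil L_k\rceil$ is affine \ch, and $\delta_\ell\lceil L_k\rceil+f_\ell$ is a nonnegative combination of affine \ch functions, hence itself affine \ch, while the right-hand side $b_\ell$ is the required constant. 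Together with the retained $z_q$-free inequalities, these describe the projection as a MIC set, so Phase~B introduces no further disjunctions. I expect Phase~A to be the main obstacle, both because the residue-class bookkeeping must be organized so that a single substitution simultaneously simplifies all innermost ceilings while keeping the induction on $\cc$ clean, and because it is exactly here that disjunctions are forced upon us in this approach.

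Finally, iterating the inductive step removes $z_q,\dots,z_1$ in turn, and since $S\subseteq\R^n$ the end product is a DMIC set with no remaining integer variables, i.e. $S=\bigcup_j\{x\in\R^n : f_{ij}(x)\le b_{ij}\}$. It is worth flagging that disjunctions enter \emph{only} through the residue splitting of Phase~A; this localizes the obstruction that the later sections must remove in order to upgrade ``DMIC'' to ``MIC'' and thereby obtain the exact, disjunction-free characterization.
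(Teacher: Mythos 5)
Your proposal is correct, but it does considerably more work than the paper does at this point: the paper establishes Theorem~\ref{theorem:w-h-gives-projection} essentially by citation, observing that it summarizes the elimination results of Williams and Williams--Hooker once (i) their congruence constraints are rewritten as affine \ch inequalities via Ryan's observation and (ii) the continuous variables are first removed by Fourier--Motzkin elimination; you instead give a self-contained elimination argument carried out entirely inside the affine \ch formalism. The two routes are structurally parallel: your Phase~A residue splitting $z_q = Dt+r$, $r\in\{0,\dots,D-1\}$, is exactly where the Williams--Hooker congruences (and hence the disjunctions) live, and your Phase~B is their integer Fourier--Motzkin step, with the correct care taken to keep the ceiling-bearing part $f_\ell$ of each upper bound on its own side so that only nonnegative combinations of affine \ch functions, never negations of ceilings, are ever formed. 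Two details to nail down if you write this out in full: in Phase~A a single substitution clears $z_q$ only from the innermost ceilings, and the fresh variable $t$ reappears inside each enclosing ceiling with a new effective coefficient, so you must either iterate the substitution (your induction on the number of ceilings containing the eliminated variable does terminate, at the cost of nested residue disjunctions) or choose $D$ up front as a common denominator of all cumulative root-to-leaf coefficient products of $z_q$; and in Phase~B you should justify the decomposition of each constraint as $g_k+\gamma_k z_q$ by noting that once $z_q$ occurs only outside ceilings the constraint function is affine in $z_q$, so $g_k$ can be taken to be the same tree with $z_q$ set to $0$ (still affine \ch), rather than splitting a leaf into a nonnegative combination. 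Your closing remark that disjunctions enter only through the residue splitting is precisely the obstruction that Sections~\ref{s:MILP-as-MIC} and~\ref{sec:sequential-elim} are designed to remove.
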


Theorem~\ref{theorem:w-h-gives-projection} is not explicitly stated in \cite{williams-2,williams-1,williams-hooker}, even though it summarizes the main results of these papers, for two reasons: (i) the development in \cite{williams-hooker} works with linear congruences and inequalities as constraints and not affine \ch inequalities and (ii) they only treat the pure integer case. These differences are only superficial. For (i), an observation due to Ryan in~\cite{ryan1991} shows that congruences can always be expressed equivalently as affine \ch inequalities. For (ii), continuous variables (the $y$ variables in \eqref{eq:milp-represented}) can first be eliminated using Fourier-Motzkin elimination, which introduces no complications.

The converse of Theorem~\ref{theorem:w-h-gives-projection} is not true. As the following example illustrates, not every DMIC set is MILP-representable. 

\begin{example}\label{ex:dmic-is-too-big}
Consider the set $E := \{(\lambda, 2\lambda): \lambda \in \Z_+\} \cup \{(2\lambda, \lambda): \lambda \in \Z_+\}$ as illustrated in Figure~\ref{fig:counter-example}. 
\begin{figure}
\begin{center}
\includegraphics[scale=.75]{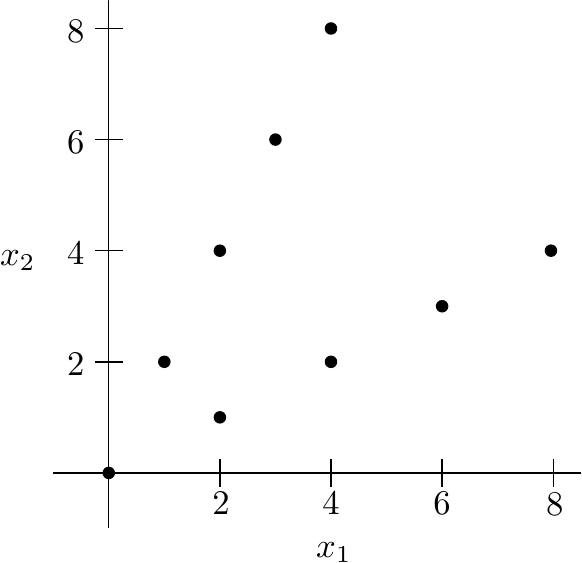}
\end{center}
\caption{A DMIC set that is not MILP-representable.}
\label{fig:counter-example}
\end{figure}
This set is a DMIC set because it can be expressed as $E =\{x \in \Z^2_+ : 2x_1 - x_2 = 0\} \cup \{x \in \Z^2_+ : x_1 - 2x_2 = 0\}.$

$E$ is not the projection of any mixed integer linear program. Indeed, by Theorem~\ref{theorem:jeroslow-lowe} every MILP-representable set has the form \eqref{eq:jeroslow-lowe-chraracterization}. Suppose $E$ has such a form. Consider the integer points in $E$ of the form $(\lambda, 2 \lambda)$ for $\lambda \in \Z_+$. There are infinitely many such points and so cannot be captured inside of the finitely-many polytopes $P_k$ in \eqref{eq:jeroslow-lowe-chraracterization}. Thus, the ray $\lambda (1, 2)$ for $\lambda \in \Z_+$ must lie inside $\intcone \{r^1, \dots, r^t\}$. Identical reasoning implies the ray $\lambda (2,1)$ for $\lambda \in \Z_+$ must also lie inside $\intcone \{r^1, \dots, r^t\}$. But then, every conic integer combination of these two rays must lie in $E$. Observe that $(3,3) = (2,1) + (1,2)$ is one such integer combination but $(3,3) \notin E$. We  conclude that $E$ cannot be represented in the form \eqref{eq:jeroslow-lowe-chraracterization} and hence $E$ is not MILP-representable.
\end{example}

\section{Characterization of MILP-representable sets as MIC sets}\label{s:MILP-as-MIC}

In this section we characterize MILP-representable sets as MIC sets. This is achieved in two steps across two subsections. The main results are Theorems~\ref{theorem:mic-is-milp} and~\ref{theorem:milp-is-mic}, which are converses of each other.

\subsection{MIC sets are MILP-representable}\label{ss:guanyi-trick-direction}

We show how to ``lift'' a MIC set to a mixed-integer linear set. The idea is simple -- replace ceiling operators with additional integer variables. However, we need to work with an appropriate representation of an affine \ch function in order to implement this idea. The next result provides the correct representation.

\begin{theorem} \label{theorem:PossibleChvatal}
For every affine \ch function $f$ represented by a binary tree $T$, one of the following cases hold:
\begin{description}
 \item[Case 1:] $\cc(T) = 0$, which implies that $f$ is an affine linear function.
 \item[Case 2:] $f = \gamma \lceil g_1 \rceil + g_2$, where $\gamma >0$ and $g_1, g_2$ are affine \ch functions such that there exist binary tree representations $T_1, T_2$ for $g_1, g_2$ respectively,  with $\cc(T_1) + \cc(T_2) + 1 \leq \cc(T)$.
 \end{description}
\end{theorem}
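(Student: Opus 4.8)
The plan is to prove the statement by structural induction on the binary tree $T$, mirroring the three ways Definition~\ref{definition:ch-functions} builds an affine \ch function. In each case I will either exhibit that $\cc(T)=0$ (landing in Case~1) or peel off a single outermost ceiling to obtain the decomposition $f=\gamma\lceil g_1\rceil + g_2$ of Case~2, all the while tracking ceiling counts so that the inequality $\cc(T_1)+\cc(T_2)+1\le\cc(T)$ is maintained. The bookkeeping is routine because scalar and sum edges contribute nothing to the ceiling count, so only the $\lceil\cdot\rceil$ edges matter.

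The base case is a leaf, where $f$ is affine linear and $\cc(T)=0$, so Case~1 holds. For the inductive step I split on the root type. If the root has a single child through a $\lceil\cdot\rceil$ edge, with child subtree $T'$ computing $g$, then $f=\lceil g\rceil$ and $\cc(T)=\cc(T')+1\ge 1$; I take $\gamma=1$, $g_1=g$ (with $T_1=T'$), and $g_2\equiv 0$ (a single leaf, $\cc(T_2)=0$), so that $\cc(T_1)+\cc(T_2)+1=\cc(T)$. If the root has a single scalar child through $a\in\Q_+$ with $a>0$ and subtree $T'$ computing $g$, I apply the induction hypothesis to $T'$: if $T'$ is in Case~1 then $f=ag$ is affine linear and $\cc(T)=\cc(T')=0$; if $T'$ is in Case~2 with $g=\gamma'\lceil g_1\rceil+g_2'$, I scale to get $f=(a\gamma')\lceil g_1\rceil + a g_2'$, set $\gamma=a\gamma'>0$, and keep $\cc(T_1)=\cc(T_1')$, $\cc(T_2)=\cc(T_2')$ so the bound survives. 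The two-children case $f=ag+bh$ is analogous: when $\cc(T)=\cc(T_g)+\cc(T_h)=0$ both $g,h$ are affine linear and hence so is $f$ (Case~1); otherwise, if some ceiling-bearing branch carries a positive coefficient—say $a>0$ and $\cc(T_g)>0$—the induction hypothesis yields $g=\gamma'\lceil g_1\rceil + g_2'$ and I absorb the remainder into $g_2 = a g_2' + b h$, whose tree has $\cc(T_2)=\cc(T_2')+\cc(T_h)$, giving $\cc(T_1)+\cc(T_2)+1\le \cc(T_g)+\cc(T_h)=\cc(T)$.

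The main obstacle is the family of \emph{degenerate} subtrees in which a label $0\in\Q_+$ (a zero scalar edge, or a summand with coefficient $0$) annihilates every ceiling-bearing branch. There $f$ reduces to an affine linear function even though $\cc(T)>0$, so Case~1 is literally unavailable (it demands $\cc(T)=0$) while Case~2 insists on $\gamma>0$. I would resolve this with a \emph{trivial-ceiling} device: since $\lceil c\rceil=c$ for any integer constant $c$, I write the affine linear $f$ as $f=1\cdot\lceil 1\rceil + (f-1)$, taking $g_1\equiv 1$ and $g_2=f-1$, which are both affine linear leaves with $\cc(T_1)=\cc(T_2)=0$. Because this situation arises exactly when $\cc(T)\ge 1$, the required bound $\cc(T_1)+\cc(T_2)+1=1\le\cc(T)$ holds, placing $f$ legitimately in Case~2. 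This reduces the whole argument to the clean dichotomy: whenever the induction reaches an affine linear $f$, use Case~1 if $\cc(T)=0$ and the trivial-ceiling trick if $\cc(T)\ge 1$; otherwise the recursion always finds a surviving ceiling with positive coefficient to extract.
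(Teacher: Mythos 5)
Your proof is correct and follows essentially the same route as the paper's: induction on the tree, peeling off an outermost ceiling in Case~2 while tracking ceiling counts through the scalar and sum nodes. The one place you go beyond the paper is the degenerate situation where an edge labeled $0\in\Q_+$ kills a ceiling-bearing subtree (the paper's proof silently assumes all scalar labels are strictly positive); your trivial-ceiling device $f = 1\cdot\lceil 1\rceil + (f-1)$ is a valid patch for that corner case and does not change the overall argument.
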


\begin{proof}
We use induction on the depth of the binary tree $T$. For the base case, if $T$ has depth $0$, then $\text{cc}(T) = 0$ and we are in Case 1. The inductive hypothesis assumes that for some $k \geq 0$, every affine \ch function $f$ with a binary tree representation  $T$ of depth less or equal to $k$, can be expressed in Case 1 or 2.

For the inductive step, consider an affine \ch function $f$ with a binary tree representation $T$ of depth $k + 1$. If the root node of $T$ has a single child, let $T'$ be the subtree of $T$ with root node equal to the child of the root node of $T$. We now consider two cases: the edge at the root node is labeled with a $\lceil\cdot\rceil$, or the edge is labeled with a scalar $\alpha > 0$. In the first case, $f = \lceil g \rceil$ where $g$ is an affine \ch function which has $T'$ as a binary tree representation. Also, $\cc(T') + 1 = \cc(T)$. Thus, we are done by setting $g_1 = g$, $g_2 = 0$ and $\gamma = 1$. In the second case, $f = \alpha g$ where $g$ is an affine \ch function which has $T'$ as a binary tree representation, with $\cc(T') = \cc(T)$. Note that $T'$ has smaller depth than $T$. Thus, we can apply the induction hypothesis on $g$ with representation $T'$. If this ends up in Case 1, then $0 = \cc(T') = \cc(T)$ and $f$ is in Case 1. Otherwise, we obtain $\gamma' > 0$, affine \ch functions $g'_1, g'_2$, and binary trees $T'_1, T'_2$ representing $g'_1, g'_2$ respectively, with \begin{equation}\label{eq:cc-single-edge}\cc(T'_1) + \cc(T'_2) + 1 \leq \cc(T') = \cc(T)\end{equation} such that $g = \gamma'\lceil g'_1 \rceil + g'_2$. Now set $\gamma = \alpha \gamma'$, $g_1 = g'_1$, $g_2 = \alpha g'_2$, $T_1 = T'_1$ and $T_2$ to be the tree whose root node has a single child with $T'_2$ as the subtree, and the edge at the root labeled with $\alpha$. Note that $\cc(T_2) = \cc(T'_2)$. Also, observe that $T_1, T_2$ represents $g_1, g_2$ resepectively. Combined with~\eqref{eq:cc-single-edge}, we obtain that $\cc(T_1) + \cc(T_2) + 1 \leq \cc(T)$.

If the root node of $T$ has two children, let $S_1, S_2$ be the subtrees of $T$ with root nodes equal to the left and right child, respectively, of the root node of $T$. Then, $f = \alpha h_1 + \beta h_2$, where $\alpha, \beta>0$ and $h_1, h_2$ are affine \ch functions with binary tree representations $S_1, S_2$ respectively. Also note that the depths of $S_1, S_2$ are both strictly less than the depth of $T$, and 

\begin{equation}\label{eq:cc-h1-h2-f}\cc(S_1) + \cc(S_2) = \cc(T)\end{equation}

By the induction hypothesis applied to $h_1$ and $h_2$ with representations $S_1, S_2$, we can assume both of them end up in Case 1 or 2 of the statement of the theorem. If both of them are in Case 1, then $\cc(S_1) = \cc(S_2) = 0$, and by~\eqref{eq:cc-h1-h2-f}, $\cc(T) = 0$. So $f$ is in Case 1. 

Thus, we may assume that $h_1$ or $h_2$ (or both) end up in Case 2. There are three subcases, (i) $h_1, h_2$ are both in Case 2, (ii) $h_1$ is Case 2 and $h_2$ in Case 1, or (iii) $h_2$ in Case 2 and $h_1$ in Case 1. We analyze subcase (i), the other two subcases are analogous. This implies that there exists $\gamma'>0$, and affine \ch functions $g'_1$ and $g'_2$ such that $h_1 = \gamma' \lceil g'_1 \rceil + g'_2$, and there exist binary tree representations $T'_1, T'_2$ for $g'_1, g'_2$ respectively, such that \begin{equation}\label{eq:cc-g'1-g'2-h1}\cc(T'_1) + \cc(T'_2) + 1 \leq \cc(S_1).\end{equation} Now set $\gamma = \alpha\gamma'$, $g_1(x) = g'_1(x)$ and $g_2(x) = \alpha g'_2(x) + \beta h_2(x)$. Then $f = \gamma\lceil g_1 \rceil + g_2$. Observe that $g_2$ has a binary tree representation $T_2$ such that the root node of $T_2$ has two children: the subtrees corresponding to these children are $T'_2$ and $S_2$, and the edges at the root node of $T_2$ are labeled by $\alpha$ and $\beta$ respectively. Therefore, \begin{equation}\label{eq:cc-g2-g'2-h2}\cc(T_2) \leq \cc(T'_2) + \cc(S_2).\end{equation}
Moreover, we can take $T_1 = T'_1$ as the binary tree representation of $g_1$. We observe that \begin{align*}\begin{array}{rcl} \cc(T_1) + \cc(T_2) + 1 &\leq &\cc(T'_1) + \cc(T'_2) + \cc(S_2) + 1 \\ & \leq & \cc(S_1) + \cc(S_2) =  \cc(T)\end{array}\end{align*} where the first inequality is from the fact that $T_1 = T'_1$ and~\eqref{eq:cc-g2-g'2-h2}, the second inequality is from~\eqref{eq:cc-g'1-g'2-h1} and the final equation is~\eqref{eq:cc-h1-h2-f}.\end{proof}

For a system of affine \ch inequalities where each affine \ch function is represented by a binary tree, the \emph{total ceiling count of this representation} is the sum of the ceiling counts of all these
binary trees. The next lemma shows how to reduce the ceiling count of a MIC set by one, in exchange for an additional integer variable. 

\begin{lemma}\label{lem:reduce-ceil-count} 
Given a system $C=\{x \in \R^n\times\mathbb{Z}^q: ~ f_i(x) \leq b_i\}$ of affine \ch inequalities with a total ceiling count $c \geq 1$, there exists a system $P=\{(x,z) \in \R^n\times\mathbb{Z}^q \times \Z: ~ f'_i(x) \leq b'_i\}$ of affine \ch inequalities with a total ceiling count of at most $c-1$, and $C = \proj_{x}(P)$.
\end{lemma}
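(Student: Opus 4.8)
The plan is to use Theorem~\ref{theorem:PossibleChvatal} to isolate a single ceiling operation, introduce one fresh integer variable to ``stand for'' the value of that ceiling, and express both the original constraints and the definition of the ceiling using only affine \ch inequalities that collectively have one fewer ceiling.

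First I would observe that since the total ceiling count is $c \geq 1$, at least one of the functions $f_i$ has a binary tree representation $T_i$ with $\cc(T_i) \geq 1$; say this is $f_1$. Applying Theorem~\ref{theorem:PossibleChvatal} to $f_1$, we are necessarily in Case 2, so we may write $f_1 = \gamma \lceil g_1 \rceil + g_2$ with $\gamma > 0$ and affine \ch functions $g_1, g_2$ having representations $T_1', T_2'$ satisfying $\cc(T_1') + \cc(T_2') + 1 \leq \cc(T_1)$. The idea is to introduce a new integer variable $z \in \Z$ intended to equal $\lceil g_1(x) \rceil$, replacing the constraint $f_1(x) \leq b_1$ by $\gamma z + g_2(x) \leq b_1$, which now uses only the ceilings inside $g_1$ and $g_2$ (whose count is at most $\cc(T_1) - 1$), and adding inequalities forcing $z$ to behave like the ceiling of $g_1$.

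The key step is capturing $z = \lceil g_1(x) \rceil$ with affine \ch inequalities that do not re-introduce a full ceiling on $g_1$. The standard trick is to impose $g_1(x) \leq z$ together with $z < g_1(x) + 1$; since $z$ is an integer, these two force $z = \lceil g_1(x) \rceil$. I would write the strict inequality as a non-strict one using the rational data: because all leaves of the trees carry rational coefficients, $g_1$ takes values whose denominators are bounded, so $z \leq g_1(x) + 1 - \varepsilon$ for a suitable fixed rational $\varepsilon > 0$ is equivalent to $z < g_1(x)+1$ on the relevant lattice, or alternatively one phrases the two bounds directly as $g_1(x) - z \leq 0$ and $z - g_1(x) \leq 1 - \varepsilon$. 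Crucially, each of these two new inequalities uses the function $g_1$ (with ceiling count $\cc(T_1') \leq \cc(T_1) - 1 - \cc(T_2')$) rather than $\lceil g_1 \rceil$, so the added constraints contribute ceilings only from $g_1$'s internal structure, not an extra outer ceiling.

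Finally I would total up the ceiling counts to verify the bound $c - 1$. The replaced constraint on $f_1$ now costs $\cc(T_1') + \cc(T_2')$; the two definitional constraints for $z$ each cost $\cc(T_1')$; and all the untouched constraints $f_2, \dots, f_m$ retain their counts. Here the main obstacle to watch is that naively copying $g_1$ into three separate inequalities triples its internal ceiling count, so the arithmetic must be handled carefully---the cleanest route is to track that $\cc(T_1') + \cc(T_2') + 1 \leq \cc(T_1)$ gives enough slack only if $g_1$ appears a bounded number of times, and if a single copy is not enough one instead introduces one auxiliary variable equal to $g_1(x)$'s ceiling and reuses $z$ in all three inequalities so $g_1$'s ceilings are counted once. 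I would set up the bookkeeping so that the new system's total ceiling count is at most $c - 1$, and conclude by checking $C = \proj_x(P)$: the projection forgets $z$, and for every $x$ feasible in $C$ there is a unique integer $z = \lceil g_1(x) \rceil$ making $(x,z)$ feasible in $P$, while conversely any feasible $(x,z)$ in $P$ forces $z = \lceil g_1(x) \rceil$ and hence $x \in C$.
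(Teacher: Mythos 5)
Your overall strategy---apply Theorem~\ref{theorem:PossibleChvatal} to write $f_1 = \gamma\lceil g_1\rceil + g_2$ and trade the outer ceiling for one new integer variable $z$---is exactly the paper's. But the execution has a genuine gap: you try to force $z = \lceil g_1(x)\rceil$ \emph{exactly}, which requires the two-sided sandwich $g_1(x) \leq z < g_1(x)+1$, and this creates three problems. First, the strict inequality cannot be converted to a non-strict one with a fixed $\varepsilon>0$: the variables include $n$ \emph{continuous} coordinates, so $g_1(x)$ ranges over a continuum and there is no bounded-denominator argument available. Second, the lower bound on $g_1$ (the constraint $z - g_1(x) \leq 1-\varepsilon$) involves $-g_1$, which is in general \emph{not} an affine Chv\'atal function when $g_1$ contains ceilings, since the grammar of Definition~\ref{definition:ch-functions} only permits nonnegative rational coefficients on edges. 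Third, as you yourself note, writing $g_1$ into two inequalities doubles its internal ceiling count $\cc(T'_1)$, and the available slack $\cc(T'_1)+\cc(T'_2)+1 \leq \cc(T)$ only pays for one copy; your proposed repair (reusing $z$ so the ceilings are ``counted once'') does not actually produce a system of affine Chv\'atal inequalities with the required count.

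The paper sidesteps all three issues with one observation you are missing: because $\gamma > 0$, the constraint $\gamma\lceil g_1(x)\rceil + g_2(x) \leq b_1$ is \emph{monotone} in $\lceil g_1(x)\rceil$, so you do not need $z$ to equal the ceiling---you only need $z$ to be \emph{some} integer with $g_1(x) \leq z$ and $\gamma z + g_2(x) \leq b_1$. Such a $z$ exists iff $\lceil g_1(x)\rceil \leq (b_1 - g_2(x))/\gamma$, i.e., iff $f_1(x)\leq b_1$: one direction takes $z = \lceil g_1(x)\rceil$ as the witness, and the other uses $\gamma\lceil g_1(x)\rceil + g_2(x) \leq \gamma z + g_2(x) \leq b_1$. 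This yields exactly two new constraints, $g_1(x) - z \leq 0$ and $g_2(x) + \gamma z \leq b_1$, each function appearing once, with total ceiling count $\cc(T'_1)+\cc(T'_2) \leq \cc(T)-1$, no strict inequalities, and no negated Chv\'atal functions. Note also that your final claim that any feasible $(x,z)\in P$ ``forces $z = \lceil g_1(x)\rceil$'' is false even in your own setup once the defining constraints are relaxed, and it is unnecessary for $\proj_x(P) = C$.
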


\begin{proof} Since $c\geq 1$, at least one of the $f_i$ is given with a binary tree representation $T$ with strictly positive ceiling count. Without loss of generality we assume it is $f_1$. This means $f_1$, along with its binary tree representation $T$, falls in Case 2 of Theorem~\ref{theorem:PossibleChvatal}. 
Therefore, one can write $f$ as $f_1 = \gamma \lceil g_1 \rceil + g_2,$ with $\gamma >0$, and $g_1, g_2$ are affine \ch functions such that there exist binary tree representations $T_1, T_2$ for $g_1, g_2$ respectively,  with $\cc(T_1) + \cc(T_2) + 1 \leq \cc(T)$. Dividing by $\gamma$ on both sides, the inequality $f_1(x) \leq b_1$ is equivalent to $
        \lceil g_1(x) \rceil +(1/\gamma) g_2(x) \leq b_1/\gamma.$ Moving $(1/\gamma) g_2(x)$ to the right hand side, we get $
        \lceil g_1(x) \rceil \leq -(1/\gamma) g_2(x) + b_1/\gamma.$
      This inequality is easily seen to be equivalent to two inequalities, involving an extra integer variable $z \in \mathbb{Z}$: $
        \lceil g_1(x) \rceil \leq z \leq - (1/\gamma) g_2(x) + b_1 / \gamma, \label{eq:with-ceiling-case1}$
      which, in turn is equivalent to $ g_1(x) \leq z \leq -(1/\gamma) g_2(x) + b_1 / \gamma,$ since $z \in \mathbb{Z}$. Therefore, we can replace the constraint $f_1(x) \leq b_1$ with the two constraints      
      \begin{align}
        & g_1(x) - z \leq 0, \label{eq:g1}\\
        & (1/\gamma) g_2(x) + z \leq b_1/ \gamma  \Leftrightarrow g_2(x) + \gamma z \leq b_1\label{eq:g2}
      \end{align}
as long as we restrict $z \in \Z$. Note that the affine \ch functions on the left hand sides of~\eqref{eq:g1} and~\eqref{eq:g2} have binary tree representations with ceiling count equal to $\cc(T_1)$ and $\cc(T_2)$ respectively. Since $\cc(T_1) + \cc(T_2) + 1 \leq \cc(T)$, the total ceiling count of the new system is at least one less than the total ceiling count of the previous system.\end{proof}

The key result of this subsection is an immediate consequence.

\begin{theorem}\label{theorem:mic-is-milp}
Every MIC set is MILP-representable. 
\end{theorem}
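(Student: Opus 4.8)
The plan is to prove Theorem~\ref{theorem:mic-is-milp} by induction on the total ceiling count of the defining system, using Lemma~\ref{lem:reduce-ceil-count} as the inductive engine. The key observation is that the lemma lets us trade one unit of ceiling count for one additional integer variable while preserving the set up to projection, so iterating it drives the ceiling count down to zero.

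First I would set up the base case. Let $S = \{(x,z) \in \R^n \times \Z^q : f_i(x,z) \le b_i, \ i = 1, \dots, m\}$ be a MIC set, where each $f_i$ is represented by some binary tree $T_i$, and let $c = \sum_{i=1}^m \cc(T_i)$ be the total ceiling count. If $c = 0$, then by Case~1 of Theorem~\ref{theorem:PossibleChvatal} every $f_i$ is an affine linear function, so $S$ is exactly a mixed-integer linear set of the form~\eqref{eq:milp-represented} (with the continuous block trivial or absorbed into $x$), and hence MILP-representable by definition. This is the anchor of the induction.

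For the inductive step, I would assume that every MIC set whose defining system has total ceiling count at most $c-1$ is MILP-representable, and consider a MIC set $S$ with total ceiling count $c \ge 1$. Applying Lemma~\ref{lem:reduce-ceil-count} produces a system $P \subseteq \R^n \times \Z^q \times \Z$ of affine \ch inequalities with total ceiling count at most $c-1$ such that $S = \proj_x(P)$. By the inductive hypothesis, the set $P$, viewed as a MIC set in the variables $(x,z,z_{\text{new}})$ with the integer variables being $(z, z_{\text{new}}) \in \Z^{q+1}$, is MILP-representable: there exist rational matrices and a vector expressing $P$ as a projection of a mixed-integer linear set. Composing the two projections---the projection from the MILP representation of $P$ down to the space of $P$, followed by $\proj_x$---gives $S$ as a single projection of a mixed-integer linear set, since composition of coordinate projections is again a coordinate projection. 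Hence $S$ is MILP-representable, completing the induction.

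The only point requiring care, rather than a genuine obstacle, is the bookkeeping of the projection operators and the integrality declarations: one must verify that the extra integer variable $z$ introduced by the lemma is correctly folded into the integer block of the final MILP representation, and that $\proj_x$ applied to the MILP-representation of $P$ yields precisely $S$ rather than some superset. Both follow directly from $S = \proj_x(P)$ and the fact that projections compose, so I expect no technical difficulty; the substance of the argument is entirely carried by Theorem~\ref{theorem:PossibleChvatal} and Lemma~\ref{lem:reduce-ceil-count}, which is exactly why the theorem is stated as an immediate consequence.
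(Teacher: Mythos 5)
Your proof is correct and follows essentially the same route as the paper: the paper's proof is simply ``apply Lemma~\ref{lem:reduce-ceil-count} at most $c$ times,'' which is exactly the induction on total ceiling count that you have written out, with the base case $c=0$ handled by Case~1 of Theorem~\ref{theorem:PossibleChvatal} and the composition of projections made explicit. No gaps.
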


\begin{proof} Consider any system of affine \ch inequalities describing the MIC set, with total ceiling count $c \in \N$. Apply Lemma~\ref{lem:reduce-ceil-count} at most $c$ times to get the desired result.\end{proof}

\subsection{MIP-representable sets are MIC sets}\label{ss:theory-heavy-direction}

We now turn to showing the converse of Theorem~\ref{theorem:mic-is-milp}, that every MILP-representable set is a MIC set (Theorem~\ref{theorem:milp-is-mic} below). This direction leverages some established theory in integer programming, in particular,

\begin{theorem}\label{theorem:b-j-mod}
For any rational $m \times n$ matrix $A$, there exists a finite set of Chv\'atal functions $f_i: \R^m \to \R$, $i\in I$ with the following property: for every $b\in \R^m$, $\{ x \in \Z^n  \, : \, Ax \ge b\}$ is nonempty if and only if $f_i(b) \leq 0$ for all $i\in I$. Moreover, these functions can be explicitly constructed from the matrix $A$.
\end{theorem}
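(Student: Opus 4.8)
The plan is to prove this via the duality theory for integer programming, specifically the structure of Chvátal–Gomory cutting plane proofs. The statement is essentially a classical result of Blair and Jeroslow, so I would follow the Chvátal function approach to integer feasibility. The high-level idea is that the solvability of $\{x \in \Z^n : Ax \ge b\}$ is governed by a finite "value function'' test: there is a finite family of Chvátal functions, built from the matrix $A$ alone, such that feasibility for a given right-hand side $b$ is equivalent to all these functions being nonpositive at $b$.

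First I would recall the Chvátal–Gomory procedure. Starting from the rational polyhedron $\{x : Ax \ge b\}$, one derives valid inequalities for the integer hull by taking nonnegative combinations $u^\top A x \ge u^\top b$ with $u \ge 0$ such that $u^\top A$ is integral, and then rounding the right-hand side up to obtain $u^\top A x \ge \lceil u^\top b\rceil$. Iterating this rounding-and-combining process produces the Chvátal closure, and by Chvátal's theorem (for rational data) finitely many rounds suffice to reach the integer hull. The crucial point is that each derived inequality has a left-hand side $u^\top A x$ that depends only on $A$ (through the integral row vector $u^\top A$), while the dependence on $b$ is entirely packaged into the right-hand side, which takes the form of a Chvátal function $f(b)$ applied to $b$: each rounding operation contributes a $\lceil \cdot \rceil$ and each combination contributes a nonnegative rational scaling, exactly matching Definition~\ref{definition:ch-functions}.

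Next I would argue the feasibility characterization. The set $\{x \in \Z^n : Ax \ge b\}$ is empty if and only if the integer hull is empty, which by the cutting-plane completeness happens if and only if some derived Chvátal–Gomory inequality is a certificate of infeasibility, i.e. it reduces to $0 \ge f_i(b)$ being violated, meaning $f_i(b) > 0$ for the associated Chvátal function $f_i$. Turning this around: feasibility holds if and only if $f_i(b) \le 0$ for every such certificate function $f_i$. To get a \emph{finite} index set $I$ that works \emph{uniformly} over all $b \in \R^m$, I would appeal to the finite convergence of the Chvátal–Gomory procedure together with a finiteness argument on the distinct integral row-combinations $u^\top A$ that can arise within a bounded number of rounds. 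This yields finitely many functions $f_i$, each explicitly constructed by recording the sequence of multipliers $u$ and rounding steps, so the "explicitly constructed from $A$'' clause follows from the construction itself.

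The main obstacle I anticipate is establishing the \emph{uniformity} and \emph{finiteness} of the family $\{f_i\}_{i \in I}$ independently of $b$: one must show that a single finite collection of Chvátal functions, fixed in advance from $A$, suffices to detect infeasibility for \emph{every} right-hand side simultaneously. The rounding depth and the multipliers $u$ a priori could depend on $b$, so the heart of the argument is to bound the required cutting-plane derivations in a way that depends only on $A$ (for instance, via bounds on the relevant subdeterminants of $A$ and the structure of the finitely many minimal infeasibility certificates). I would lean on the Blair–Jeroslow machinery here, as this uniform finiteness is precisely their contribution; if I were proving it from scratch, I would isolate this as the key lemma and prove it by induction on the cutting-plane rank together with a compactness/finite-generation argument on the cone of valid integral combinations.
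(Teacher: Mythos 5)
Your overall route---Chv\'atal--Gomory derivations whose left-hand sides depend only on $A$ and whose right-hand sides accumulate into Chv\'atal functions of $b$, followed by a uniform finiteness argument---is the same skeleton as the paper's proof. But there is a genuine gap you do not address: the theorem quantifies over \emph{every} $b \in \R^m$, including irrational $b$, and each classical result you invoke (Chv\'atal's finite-convergence theorem ``for rational data,'' the Blair--Jeroslow value-function machinery, Schrijver's Corollary 23.4) is stated and proved only for rational right-hand sides. The paper stresses that irrationality is indispensable for its application (the arguments fed to the $f_i$ are $d' - A'x$ with $x$ real), and its entire appendix exists to re-prove the needed facts without rationality of $b$: a face lemma $F^{(t)} = P^{(t)} \cap F$ valid for irrational $b$, a flatness-theorem and maximal-lattice-free-set argument bounding the lattice width of a lattice-free $P$ with rational $A$ but arbitrary $b$, and from these a bound $t(n)$ on the Chv\'atal rank that is uniform over all $b$. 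Your appeal to finite convergence for rational polyhedra simply does not apply to $\{x : Ax \ge b\}$ when $b \notin \Q^m$, so the step ``finitely many rounds suffice'' is unsupported exactly where the theorem needs it.

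Second, the uniformity you correctly flag as the heart of the argument is left as a black box, and the reference you lean on (Blair--Jeroslow) again assumes rational $b$. The paper's concrete device is the following: for every subset $S$ of the rows of $A$, compute a Hilbert basis of the cone generated by those rows and stack the corresponding nonnegative multipliers into a single matrix $U$ depending only on $A$; then $UAx \ge Ub$ is a TDI description of $\{x : Ax \ge b\}$ for \emph{every} $b$ simultaneously, so one round of the closure is exactly $UAx \ge \lceil Ub \rceil$, uniformly in $b$. Iterating this $t(n)$ times yields the integer hull with right-hand sides that are Chv\'atal functions of $b$, and a final Fourier--Motzkin elimination of $x$ produces the finite family $f_i$. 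Without the ``TDI for all $b$ at once'' construction and a rank bound valid for irrational $b$, your argument does not close; if you supply those two ingredients, the rest of your outline goes through.
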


The above result is quite similar to Corollary 23.4 in~\cite{schrijver86} (see also Theorem~5.1 in \cite{blair82}). The main difference here is that we allow the right hand side $b$ to be non rational. This difference is indispensable in our analysis (see the proof of Theorem~\ref{theorem:milp-is-mic}). Although our proof of Theorem~\ref{theorem:b-j-mod} is conceptually similar to the approach in~\cite{schrijver86}, we need to handle some additional technicalities related to irrationality. We relegate this analysis to the appendix (Section~\ref{ss:proof-of-claim-2}). 
The following lemma is easy to verify.

\begin{lemma}\label{lem:affine-comp-chvatal} 
Let $T:\R^{n_1} \to \R^{n_2}$ be an affine transformation involving rational coefficients, and let $f:\R^{n_2} \to \R$ be an affine \ch function. Then $f\circ T :\R^{n_1} \to \R$ can be expressed as $f\circ T(x) = g(x)$ for some affine \ch function $g : \R^{n_1}\to \R$.
\end{lemma}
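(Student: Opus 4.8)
The plan is to prove the lemma by structural induction on a binary tree representation $T_f$ of the affine \ch function $f$, following the recursive construction in Definition~\ref{definition:ch-functions}. The guiding idea is that composition with a fixed affine transformation $T$ commutes with each of the three tree operations (scaling by a nonnegative rational, applying $\lceil \cdot \rceil$, and taking nonnegative rational combinations), so that $f \circ T$ inherits a binary tree representation of the same shape as $T_f$, differing only in the affine functions sitting at the leaves.

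For the base case, suppose the root of $T_f$ is a leaf, so $f(y) = a^\top y + c$ with $a \in \Q^{n_2}$ and $c \in \Q$. Writing $T(x) = Mx + v$ with $M$ and $v$ rational, I compute $f \circ T(x) = (a^\top M) x + (a^\top v + c)$, and since $a^\top M \in \Q^{n_1}$ and $a^\top v + c \in \Q$, this is an affine linear function with rational coefficients, hence an affine \ch function (a single leaf). For the inductive step I split according to the root of $T_f$: if $f = \alpha h$ for $\alpha \in \Q_+$, then $f \circ T = \alpha\,(h \circ T)$; if $f = \lceil h \rceil$, then $f \circ T = \lceil h \circ T \rceil$; and if $f = a h_1 + b h_2$ for $a,b \in \Q_+$, then $f \circ T = a\,(h_1 \circ T) + b\,(h_2 \circ T)$. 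In each case the subtrees have strictly smaller depth, so the induction hypothesis produces affine \ch functions $g', g'_1, g'_2$ equal to $h \circ T$, $h_1 \circ T$, $h_2 \circ T$; re-applying the corresponding tree operation to these yields an affine \ch function equal to $f \circ T$, completing the induction.

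There is no serious obstacle here; the only point demanding care is the base case, where I must use the hypothesis that $T$ has \emph{rational} coefficients to guarantee that the composed leaf $a^\top M x + (a^\top v + c)$ again has rational coefficients, as required for it to be a legitimate leaf of an affine \ch tree. The inductive cases are purely formal, since the labels $\alpha, a, b \in \Q_+$ and the $\lceil \cdot \rceil$ operation are copied verbatim from $T_f$ onto the tree representation of $f \circ T$, so no new irrational data is introduced. This justifies the claim that the lemma is easy to verify.
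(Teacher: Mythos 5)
Your proof is correct: the paper states this lemma without proof (calling it ``easy to verify''), and your structural induction on the binary tree representation --- composing $T$ into the leaves and copying the $\lceil\cdot\rceil$ and $\Q_+$-combination operations verbatim --- is exactly the intended verification, with the rationality of $T$'s coefficients used in the right place (the base case).
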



\begin{theorem}\label{theorem:milp-is-mic}
Every MILP-representable set is a MIC set.
\end{theorem}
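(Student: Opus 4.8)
The plan is to reduce membership in a MILP-representable set to an integer feasibility question whose right-hand side depends affinely on the continuous variables, and then to invoke Theorem~\ref{theorem:b-j-mod} to encode that feasibility by affine \ch inequalities. Starting from the representation \eqref{eq:milp-represented}, I would first eliminate the continuous variables $y$ by Fourier-Motzkin elimination. Since $A,B,C,d$ are rational, this produces a finite system of linear inequalities $A'x + C'z \ge d'$ with rational data $A',C',d'$ such that, for each fixed $(x,z)$, there exists $y \in \R^p$ with $Ax+By+Cz \ge d$ if and only if $A'x+C'z \ge d'$. Retaining the integrality of $z$, we obtain
\[
S = \{ x \in \R^n : \exists\, z \in \Z^q,\ C'z \ge d' - A'x\}.
\]
Thus $x \in S$ precisely when the integer system $C'z \ge b$ is feasible for the right-hand side $b = d' - A'x$.

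Next, I would apply Theorem~\ref{theorem:b-j-mod} to the rational matrix $C'$. This yields a finite family of \ch functions $g_i$, $i \in I$, such that for every $b \in \R^m$ the set $\{z \in \Z^q : C'z \ge b\}$ is nonempty if and only if $g_i(b) \le 0$ for all $i \in I$. The crucial point is that here $b = d' - A'x$ ranges over all real vectors as $x$ ranges over $\R^n$, and so may be irrational; this is exactly why we need the strengthened version of the Blair--Jeroslow result that permits non-rational right-hand sides. Substituting $b = d' - A'x$ gives that $x \in S$ if and only if $g_i(d' - A'x) \le 0$ for all $i \in I$.

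Finally, since $x \mapsto d' - A'x$ is an affine transformation with rational coefficients and each $g_i$ is a \ch function, Lemma~\ref{lem:affine-comp-chvatal} shows that each composition $f_i(x) := g_i(d' - A'x)$ is an affine \ch function. Hence
\[
S = \{ x \in \R^n : f_i(x) \le 0,\ i \in I\},
\]
which is a MIC set (with no remaining integer variables), completing the argument.

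The main obstacle I anticipate is precisely the possible irrationality of the right-hand side $b = d' - A'x$: the classical form of the Blair--Jeroslow/Schrijver characterization is stated for rational $b$, whereas only the version established as Theorem~\ref{theorem:b-j-mod} handles arbitrary real $b$, which is indispensable because $x$ ranges over all of $\R^n$. The remaining ingredients---correctness of Fourier-Motzkin over the mixed space $(x,z)$, the preservation of rationality of the coefficients, and the composition of a \ch function with a rational affine map---are routine given the results already in hand.
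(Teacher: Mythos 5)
Your proposal is correct and follows essentially the same route as the paper's proof: Fourier--Motzkin elimination of $y$, application of Theorem~\ref{theorem:b-j-mod} to $C'$ with the (possibly irrational) right-hand side $d'-A'x$, and composition via Lemma~\ref{lem:affine-comp-chvatal}. You also correctly identify the key subtlety---the need for the non-rational right-hand-side version of the Blair--Jeroslow result---which is exactly the point the paper emphasizes.
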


\begin{proof} 
Let $m, n,  p, q \in \N$. Let $A \in \Q^{m \times n}, B \in \Q^{m\times p}, C \in \Q^{m\times q}$ be any rational matrices, and let $d\in \Q^m$. Define $
\mathcal{F} = \{(x,y,z) \in \R^n \times \R^p \times \Z^q \;: \;\;Ax + By + Cz \geq d\}.$
It suffices to show that the projection of $\mathcal{F}$ onto the $x$ space is a MIC set.

By applying Fourier-Motzkin elimination on the $y$ variables, we obtain rational matrices $A', C'$ with $m'$ rows for some natural number $m'$, and a vector $d'\in \Q^{m'}$ such that the projection of $\mathcal{F}$ onto the $(x,z)$ space is given by $\overline{\mathcal{F}}:= \{(x,z) \in \R^n \times \Z^q \;: \;\;A'x + C'z \geq d'\}.$

Let $f_i:\R^{m'}\to \R$, $i\in I$ be the set of Chv\'atal functions obtained by applying Theorem~\ref{theorem:b-j-mod} to the matrix $C'$. 
It suffices to show that the projection of $\overline{\mathcal{F}}$ onto the $x$ space is $\hat{\mathcal{F}}:=\{x\in \R^n \;: \; f_i(d' - A'x) \leq 0, \;\; i\in I\}$
since for every $i\in I$, $ f_i(d' - A'x) \leq 0$ can be written as $g_i(x) \leq 0$ for some affine Chv\'atal function $g_i$, by Lemma~\ref{lem:affine-comp-chvatal}.\footnote{This is precisely where we need to allow the arguments of the $f_i$'s to be non rational because the vector $d' - A'x$ that arise from all possible $x$ is sometimes non rational.} This follows from the following sequence of equivalences.
\begin{align*}
\begin{array}{rcl}
x \in \proj_x(\mathcal{F}) & \Leftrightarrow & x \in \proj_x(\overline{\mathcal{F}}) \\
& \Leftrightarrow & \exists z \in \Z^{q} \textrm{ such that } (x,z) \in \overline{\mathcal{F}} \\
& \Leftrightarrow & \exists z \in \Z^{q}  \textrm{ such that } C'z \geq d' -A'x \\ 
& \Leftrightarrow & f_i(d' -A'x) \leq 0 \textrm{ for all } i\in I \qquad (\textrm{By Theorem~\ref{theorem:b-j-mod}}) \\
& \Leftrightarrow & x \in \hat{\mathcal{F}}. \qquad (\textrm{By definition of } \hat{\mathcal{F}})\qquad
\end{array} 
\end{align*}
\end{proof}

\begin{remark}\label{rem:homogeneous} We note in the proof of Theorem~\ref{theorem:milp-is-mic} that if the right hand side $d$ of the mixed-integer set is $0$, then the affine \ch functions $g_i$ are actually \ch functions. This follows from the fact that the function $g$ in Lemma~\ref{lem:affine-comp-chvatal} is a \ch function if $f$ is a \ch function and $T$ is a linear transformation.\end{remark}

\section{An sequential variable elimination scheme for mixed-integer \ch sytems}\label{sec:sequential-elim}

Ryan shows (see Theorem 1 in~\cite{ryan1991}) that $Y$ is a finitely generated integral monoid if and only if there exist \ch functions $f_{1}, \ldots, f_p$ such that $
Y = \{ b  \, : \, f_{i}(b) \leq 0, \, i = 1, \ldots, p\}.$ By definition, a finitely generated integral monoid $Y$ is MILP representable since $Y = \{  b \, :  \, b = Ax,  x \in \Z^{n}_{+}\}$ where $A$ is an $m \times n$ integral matrix.  Thus, an alternate proof of Ryan's characterization follows from Theorems~\ref{theorem:mic-is-milp} and~\ref{theorem:milp-is-mic} and Remark~\ref{rem:homogeneous}. 

Ryan~\cite{ryan1991}  further states that  ``It is an interesting open problem to find an elimination scheme to construct the \ch constraints for an arbitrary finitely generated integral monoid.''  The results of Section 4 provide such an elimination scheme, as we show below. 

A number of authors have studied sequential projection algorithms for  linear integer programs~\cite{williams-1,williams-2,williams-hooker,balas2011projecting}. However, their sequential projection algorithms {\it do not} resolve Ryan's open question because they do not generate the \ch functions $f_{i}(b)$ required to describe $Y$. Below, we show that, in fact, all these schemes have to {\em necessarily} resort to the use of disjunctions because they try to adapt the classical Fourier-Motzkin procedure and apply it to the system $b = Ax,  x \in \Z^{n}_{+}$.

Our resolution to Ryan's open question hinges on the observation that the \ch functions that define $Y$ can be generated if certain redundant linear inequalities are added to those generated by the  Fourier-Motzkin procedure, and then the ceiling operator is applied to these redundant inequalities.  We illustrate the idea with Example~\ref{example:fm-fails} below and then outline the general procedure. Rather than work with $Ax = b, $ $x \in \Z^n_{+}$ we work with the system $Ax \ge b$ and $x \in \Z^n$.
\begin{example}\label{example:fm-fails}
Let $\mathcal{B}$ denote= the set of all $b = (b_1, \ldots, b_5) \in \R^5$ such that there exist $x_1, x_2, x_3 \in \Z$ satisfying the following inequalities. 
\begin{align}\label{eq:fm-example}
\begin{array}{rrrcr}
- x_1  &+\frac{1}{2} x_2 &-\frac{1}{10} x_3& \ge & b_1 \\
x_1 &-\frac{1}{4} x_2&&\ge& b_2\\
&-x_2&+x_3&\ge& b_3\\
&&x_3&\ge& b_4\\
&&-x_3&\ge& b_5
\end{array} 
\end{align}

Performing Fourier-Motzkin elimination on the linear relaxation of \eqref{eq:fm-example} gives 
\begin{align}
      0 &\ge  2b_1 + 2 b_2 + \tfrac{1}{2} b_3 + \tfrac{3}{10} b_5 \label{eq:fm-ray1} \\
      0 & \ge \tfrac{1}{10}b_4 + \tfrac{1}{10}b_5. \label{eq:fm-ray2}
\end{align}
Unfortunately, there is no possible application of the ceiling operator to any combination of terms in these two inequalities that results in \ch functions that characterize $\mathcal{B}$. In particular, $b^1 = (0,0,0,1,-1) \notin \mathcal{B}$ while $b^2 = (-1, 0,0,1,-1) \in \mathcal{B}$. 
Consider $b^1$. This forces $x_3 = 1$  and the only feasible values for $x_1$ are $ 1/10 \le x_1 \le 4/10.$  Therefore, for this set of $b$ values applying the ceiling operator to some combination of terms in~\eqref{eq:fm-ray1}-~\eqref{eq:fm-ray2}  must result in either~\eqref{eq:fm-ray1} positive or~\eqref{eq:fm-ray2}   positive.  Since $b_1 = b_2 = b_3 = 0$  and $b_5 = -1$ there is no ceiling operator that can be applied to any term in~\eqref{eq:fm-ray1} to make  the right hand side positive.  Hence a ceiling operator must be applied to~\eqref{eq:fm-ray2} in order to make the right hand side positive for $b_4 = 1$ and $b_5 = -1.$  However, consider $b^2$. For this right-hand-side, $x_{1}  = x_{2} = x_{3} = 1$ is feasible.  Since we still have  $b_4 = 1$ and $b_5 = -1$, the ceiling operator applied to~\eqref{eq:fm-ray2} will incorrectly conclude that there is no integer solution with $b^2$.


However, Fourier-Motzkin elimination \emph{will} work in conjunction with ceiling operations if {\em appropriate redundant inequalites are added.} Consider the  inequality $x_1  \ge  b_1 + 2 b_2 + \frac{1}{10} b_4$
  which is redundant to~\eqref{eq:fm-example}. Integrality of $x_1$ implies  $ x_1  \ge  \lceil b_1 + 2 b_2 + \frac{1}{10} b_4 \rceil.$
 Applying Fourier-Motzkin elimination to~\eqref{eq:fm-example} along with  $ x_1  \ge  \lceil b_1 + 2 b_2 + \frac{1}{10} b_4 \rceil$  generates  the additional inequality $0   \ge b_1 + \frac{1}{2} b_3 + \lceil b_1 + 2 b_2 + \frac{1}{10} b_4 \rceil + \frac{4}{10} b_5$, which separates $b^1$ and $b^2$.
\end{example}


Our proofs in Section~\ref{s:MILP-as-MIC} give a general method to systematically add the necessary redundant constraints, such as $x_1  \ge  b_1 + 2 b_2 + \frac{1}{10} b_4$ in Example~\ref{example:fm-fails}. This results in the following variable elimination scheme: at iterative step $k$ maintain a MIC set with variables $(x_k, \ldots, x_n)$ that is indeed the true projection of the original set onto these variables. By Theorem~\ref{theorem:mic-is-milp}, this MIC set is MILP representable with a set of variables $(x_k, \ldots, x_n, z).$  Then by Theorem~\ref{theorem:milp-is-mic} we can project out variable $x_k$ and additional auxiliary $z$ variables that were used to generate the MILP representation and obtain a new  MIC in only variables $(x_{k+1}, \ldots, x_n).$ {\em The key point is that adding these auxiliary variables and then using Theorem~\ref{theorem:milp-is-mic} introduces the necessary redundant inequalities.} Repeat until all variables are eliminated and a MIC set remains in the $b$ variables.  This positively answers the question of Ryan~\cite{ryan1991} and provides a projection algorithm in a similar vein to Williams \cite{williams-1,williams-2,williams-hooker}  and Balas \cite{balas2011projecting} but {\it without} use of disjunctions.

\bibliographystyle{plain}
\bibliography{../../../../references/references}


\newpage

\appendix

\section{Proof of Theorem~\ref{theorem:b-j-mod}}\label{ss:proof-of-claim-2}

Theorem~\ref{theorem:b-j-mod} is a generalization of Collary 23.4b(i) in \cite{schrijver86} (see also Theorem~5.1 in \cite{blair82}). In previous work, the right-hand side $b$ was assumed to be rational. This, however, is too strong for our purposes. This section proceeds by showing that the supporting results used to prove Corollary 23.4b(i) in \cite{schrijver86}, can be extended to the case where $b$ is non rational. To our knowledge, no previous work has explicitly treated the case where $b$ is non rational.

First we need some preliminary definitions and results. A system of linear inequalities $Ax \ge b$ where $A = (a_{ij})$ has $a_{ij} \in \Q$ for all $i,j$ (that is, $A$ is rational) is \emph{totally dual integral} (TDI) if the minimum in the LP-duality equation
\begin{align*}
\min \left\{c^\top x : Ax \ge b \right\} = \max \left\{y^\top b : A^\top y = c, y \ge 0\right\}  
\end{align*}
has an integral optimal solution $y$ for every integer vector $c$ for which the minimum is finite. Note that rationality of $b$ is not assumed in this definition. When $A$ is rational, the system $Ax \ge b$ can be straightforwardly by manipulated so that all coefficients on the $x$ on the right-hand side are integer. Thus, we may often assume without loss that $A$ is integral. 

For our purposes, the significance of a system being TDI is explained by the following result. For any polyhedron $P \subseteq \R^n$, $P'$  denotes its \ch closure. We also recursively define the $t$-th \ch closure of $P$ as $P^{(0)} := P$, and $P^{(t+1)} = (P^{(t)})'$ for $i \geq 1$.

\begin{theorem}[See \cite{schrijver86}  Theorem 23.1]\label{theorem:schrijver-23.1}
Let $P = \{  x \, : \, Ax \ge b \}$ be nonempty and assume $A$ is integral.  If $Ax \ge b$ is a TDI representation of the polyhedron $P$ then
\begin{eqnarray}
P' = \{  x \, : \, Ax \ge  \lceil b \rceil\}.  \label{eq:tdi-ch-closure}
\end{eqnarray}
\end{theorem}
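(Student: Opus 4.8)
The plan is to prove the two inclusions separately, using the easy direction for $P' \subseteq \{x : Ax \ge \lceil b \rceil\}$ and reserving the TDI hypothesis entirely for the reverse inclusion. First I would recall that the \ch closure can be written as $P' = \bigcap_c \{x : c^\top x \ge \lceil \delta_c \rceil\}$, where the intersection ranges over all integral vectors $c$ for which $\delta_c := \min\{c^\top x : x \in P\}$ is finite; this minimum is attained because $P$ is nonempty and the objective is bounded below on the set of relevant $c$.

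For the inclusion $P' \subseteq \{x : Ax \ge \lceil b \rceil\}$, I would note that since $A$ is integral, each row $a_i$ is an integral vector and $a_i^\top x \ge b_i$ is valid for $P$, so $\delta_{a_i} \ge b_i$ and hence $\lceil \delta_{a_i} \rceil \ge \lceil b_i \rceil$. The \ch cut generated by $c = a_i$ is therefore at least as strong as $a_i^\top x \ge \lceil b_i \rceil$, and since $P'$ is contained in every such cut, intersecting over all rows $i$ yields $P' \subseteq \{x : Ax \ge \lceil b \rceil\}$. This direction does not invoke the TDI hypothesis.

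For the reverse inclusion $\{x : Ax \ge \lceil b \rceil\} \subseteq P'$, I would fix an arbitrary integral $c$ with $\delta_c$ finite and show that the cut $c^\top x \ge \lceil \delta_c \rceil$ is already implied by $Ax \ge \lceil b \rceil$. This is where TDI is used: by LP duality $\delta_c = \max\{y^\top b : A^\top y = c, y \ge 0\}$, and since $Ax \ge b$ is TDI and $c$ is integral, this maximum is attained by some \emph{integral} $y^* \ge 0$ with $A^\top y^* = c$ and $(y^*)^\top b = \delta_c$. Then for any $x$ satisfying $Ax \ge \lceil b \rceil$ I would chain $c^\top x = (y^*)^\top (A x) \ge (y^*)^\top \lceil b \rceil$, using $y^* \ge 0$. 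The crucial observation is that $(y^*)^\top \lceil b \rceil$ is a nonnegative integer combination of integers, hence itself an integer, and it satisfies $(y^*)^\top \lceil b \rceil \ge (y^*)^\top b = \delta_c$; being an integer at least $\delta_c$, it is at least $\lceil \delta_c \rceil$. Thus $c^\top x \ge \lceil \delta_c \rceil$, so $x$ satisfies the cut, and since $c$ was arbitrary, $x \in P'$.

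The main obstacle, and the only delicate point, is the final rounding chain in the reverse inclusion, which relies essentially on the \emph{integrality} of the dual optimizer $y^*$ furnished by TDI; without it one would recover only the unrounded inequality $c^\top x \ge \delta_c$. I would emphasize that this is precisely where the hypothesis is needed and that the argument requires no rationality of $b$: the quantities $\delta_c$ and $b$ may be irrational, and the proof only uses that $(y^*)^\top \lceil b \rceil$ is an integer dominating the real number $\delta_c$. This is exactly the feature that allows Theorem~\ref{theorem:schrijver-23.1} to extend the classical rational statement to arbitrary real right-hand sides.
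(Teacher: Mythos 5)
Your proof is correct. The paper does not actually prove Theorem~\ref{theorem:schrijver-23.1}; it imports it from Schrijver, and your argument is precisely the standard one (the easy inclusion from the integral rows, and the TDI-supplied integral dual optimizer $y^*$ making $(y^*)^\top \lceil b \rceil$ an integer dominating $\delta_c$), correctly observing that nothing in it requires rationality of $b$.
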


We now show how to manipulate the system $Ax \ge b$ to result in one that is TDI. The main power comes from the fact that this manipulation depends only on $A$ and works for every right-hand side $b$.

\begin{theorem}\label{theorem:basu}
Let $A$ be a rational $m \times n$ matrix.  Then there exists another nonnegative $q \times m$ rational matrix $U$ such that  for every $b$ the polyhedron $P = \{  x \in \R^{n} \, : \,  Ax \ge b \},$  has a representation $P = \{  x \in \R^{n} \, : \,  Mx \ge b' \}$ where the system $Mx \ge b'$ is TDI and $M = UA,$  $b' = Ub.$
\end{theorem}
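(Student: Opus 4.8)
The plan is to build $U$ directly from the combinatorial and lattice data of $A$ alone, using the classical fact (see \cite{schrijver86}, around Theorems~16.4 and 22.5) that an integral system $Mx \ge b'$ describing a polyhedron $P$ is TDI if and only if, for every face $F$ of $P$, the rows of $M$ tight on $F$ form a Hilbert basis of the cone they generate. Since scaling the rows of $A$ by positive rationals changes neither $P$ nor the TDI property, and any such scaling can be absorbed into $U$ afterwards, I would assume without loss of generality that $A$ is integral.

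First I would, for each subset $I \subseteq \{1,\dots,m\}$, form the rational cone $C_I = \cone\{a_i : i \in I\}$, where $a_i$ is the $i$-th row of $A$, and fix a finite integral Hilbert basis $H_I$ of $C_I$; such a basis exists for every rational cone, pointed or not. Each $h \in H_I$ lies in $C_I$, so I can write $h = \sum_{i \in I} \mu^h_i a_i$ with rational $\mu^h_i \ge 0$; the vector $\mu^h \in \Q^m_+$, padded with zeros outside $I$, becomes a row of $U$. I also include the rows $e_1,\dots,e_m$ so that the original rows of $A$ are reproduced. The resulting $U$ is nonnegative, rational, and depends only on $A$. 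Then $M = UA$ is integral, its rows being the integral Hilbert-basis vectors together with the integral rows $a_i$, and $b' = Ub$ assigns to the row $h$ the value $\delta_h = \sum_{i\in I}\mu^h_i b_i$. Every row of $M$ encodes an inequality that is a nonnegative combination of the constraints $a_i^\top x \ge b_i$, hence valid for $P$; since $M$ also contains all original rows, $\{x : Mx \ge b'\} = P$ for every $b$.

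It then remains to verify that $Mx \ge b'$ is TDI for every $b$. I would fix $b$ with $P$ nonempty, take any face $F$, and let $I_F = \{i : a_i^\top x = b_i \text{ on } F\}$ be its set of tight original constraints. The key computation is that a Hilbert-basis row $h = \sum_{i\in I}\mu^h_i a_i$ is tight on $F$ exactly when every $a_i$ with $\mu^h_i > 0$ is tight on $F$, i.e. when the support of $\mu^h$ lies in $I_F$; this is immediate from the fact that $h^\top x - \delta_h = \sum_{i} \mu^h_i (a_i^\top x - b_i)$ is a sum of nonnegative terms on $F$. Consequently the rows of $M$ tight on $F$ all lie in $C_{I_F}$, they generate $C_{I_F}$ since they include the rows $a_i$ for $i \in I_F$, and, because every element of $H_{I_F}$ has support inside $I_F$ and is therefore tight on $F$, they contain the Hilbert basis $H_{I_F}$ of $C_{I_F}$. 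A set of integral vectors spanning a cone and containing a Hilbert basis of that cone is itself a Hilbert basis, so the tight rows form a Hilbert basis of $C_{I_F}$, and the characterization yields TDI.

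The main obstacle is conceptual rather than computational: making the construction genuinely uniform in $b$. The faces of $P$, and hence which subsets $I_F$ actually occur, vary with $b$, so the argument must supply Hilbert-basis rows for \emph{every} subset $I$ in advance and then check that, whatever $b$ and whichever face $F$ arises, exactly the right rows become tight and assemble into a Hilbert basis; the support computation above is what makes this work and what forces the right-hand sides to emerge as $Ub$. Two secondary technical points are that the cones $C_{I_F}$ need not be pointed, so I rely on the existence of Hilbert bases for arbitrary rational cones rather than the pointed-cone uniqueness, and the reduction to integral $A$ must be carried through so that $M = UA$ is integral while the claimed nonnegative rational $U$ still refers to the original $A$.
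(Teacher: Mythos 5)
Your construction of $U$ is exactly the paper's: for every subset of row indices of $A$, take an integral Hilbert basis of the cone generated by those rows, express each basis element as a nonnegative rational combination of those rows, and let the combination vectors (padded with zeros) form the rows of $U$; the equality of the two polyhedra then follows because the new inequalities are nonnegative aggregations of the old ones while the old ones are retained. Where you diverge is in certifying that $Mx \ge b'$ is TDI. The paper argues directly from the definition: given an integral objective $c$ with finite minimum, it takes an optimal dual solution $\bar u$ of the \emph{original} system $Ax\ge b$, lets $S$ be its support, uses complementary slackness to see that the rows indexed by $S$ --- and hence the Hilbert basis of $C(S)$ --- are tight at the primal optimum, and writes $c$ as a nonnegative integer combination of that Hilbert basis to exhibit an integral optimal dual solution for $Mx\ge b'$. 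You instead invoke the face characterization of TDI (the tight rows on each face form a Hilbert basis of the cone they generate) and verify its hypothesis via the support computation $h^\top x - \delta_h = \sum_i \mu^h_i(a_i^\top x - b_i)$, which is clean and correct. The one loose end, and it matters in this paper, is that the characterization you cite (Schrijver, Theorem 22.5) is stated for rational systems, whereas the entire purpose of this appendix is to allow irrational $b$ (the definition of TDI used here explicitly drops rationality of the right-hand side). You need the ``if'' direction of that characterization for irrational $b$; it does hold, because LP duality and complementary slackness are indifferent to rationality, and unwinding its proof yields precisely the explicit computation the paper performs. Given how carefully the surrounding appendix re-proves Schrijver's rational-case results for irrational right-hand sides, you should make that verification explicit rather than cite the rational-case theorem. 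With that addendum your proof is complete; your explicit inclusion of the rows $e_1,\dots,e_m$ in $U$ is a harmless variant of the paper's remark that singleton subsets already contribute (scalings of) the original rows.
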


\begin{proof}
First construct the matrix $U.$  Let  ${\cal P}(\{1, 2, \ldots, m \})$  denote the power set of  $\{1, 2, \ldots, m\}.$   For each subset of rows $a^i$  of  $A$ with $i  \in S$  where    $S \in {\cal P}(\{1, 2, \ldots, m \}),$   define the cone
\begin{eqnarray}
C(S) := \{  a \in \R^n \, : \,  a = \sum_{i \in S} u_i a^i,  u _i \ge 0,  i \in S \}. \label{eq:defineCS}
\end{eqnarray}
By construction the cone $C(S)$ in~\eqref{eq:defineCS}  is a rational polyhedral cone.  Then by Theorem 16.4 in \cite{schrijver86} there exist integer vectors  $m^{k},$ for $k = k^{S}_{1},  k^{S}_{2} \ldots, k^{S}_{q_{S}}$   that define a  Hilbert basis for this cone.   In this indexing scheme  $q_{S}$ is the cardinality of the set $S.$  We assume  that there are  $q_{S}$ distinct indexes $ k^{S}_{1},  k^{S}_{2} \ldots, k^{S}_{q_{S}}$  assigned to each set $S$ in the power set  ${\cal P}(\{1, 2, \ldots, m \}).$  Since  each $m^{k} \in C(S)$  there is  a nonnegative nonnegative  vector $u^{k}$ that generates $m^{k}$. Without loss  each $u^{k}$ is an $m-$dimensional vector since we can assume a component of zero for each component $u^{k}$ not indexed by  $S.$   Thus   $u^{k} A = m^{k}.$   Define a matrix  $U$ to be the matrix with rows $u^{k}$ for all $k =   k^{S}_{1},  k^{S}_{2} \ldots, k^{S}_{q_{S}}$ and $S \in {\cal P}(\{1, 2, \ldots, m \}).$   Then $M = UA$ is a matrix with rows corresponding to all of the Hilbert bases for the power set of $\{ 1, 2, \ldots, m \}.$  That is, the number of rows in $M$ is $q = \sum_{S \in  {\cal P}(\{1, 2, \ldots, m \})} q_{S}.$
\vskip 7pt
We first show that $M x \ge b'$ is a TDI representation of 
\begin{eqnarray}
P =  \{  x \in \R^{n} \, : \,  Ax \ge b \}  = \{  x \in \R^{n} \, : \,  Mx \ge b' \}.  \label{eq:equal-poly}
\end{eqnarray}
Note that  $\{  x \in \R^{n} \, : \,  Ax \ge b \}$  and $\{  x \in \R^{n} \, : \,  Mx \ge b' \}$  define the same polyhedron since the system of the inequalities $Mx \ge b'$ contains all of the inequalities $Ax \ge b$ (this is because the power set of $\{1, 2, \ldots, m \}$ includes each singleton set) plus additional inequalities that are nonnegative aggregations of inequalities in the system  $Ax \ge b.$  In order to show $Mx \ge b'$ is a TDI representation, assume $c \in \R^n$ is an integral vector and the minimum of 
\begin{eqnarray}\label{eq:tdi}
\max \{ y b' \, : \,  y M = c, y \ge 0 \} 
\end{eqnarray}
is finite. It remains to show there is an integral optimal dual solution to \eqref{eq:tdi}. By linear programming duality $\min \{ c x | M x \ge b' \}$ has an optimal solution $\bar x$ and 
\begin{eqnarray}
\max \{ y b' \, : \,  y M = c, y \ge 0 \} =  \min \{ c x \, : \,  M x \ge b' \}.  \label{eq:tdi-1}
\end{eqnarray}
Then by equation~\eqref{eq:equal-poly}
\begin{eqnarray}
 \min \{ c x \, : \,  M x \ge b' \} =  \min \{ c x \, : \,  Ax \ge b\}.   \label{eq:tdi-2}
\end{eqnarray}
and    $\min \{ c x \, : \,  A x \ge b \}$  also has optimal solution  $\bar x.$ Then again by linear programming duality
\begin{eqnarray}
 \min \{ c x \, : \,  Ax \ge b\}  = \max \{ u b  \,  :  \,  u A = c,  \, u \ge 0\}.
\end{eqnarray}
Let $\bar u$ be an optimal dual solution to $ \max \{ u b  \,  :  \,  u A = c,  \, u \ge 0\}.$  Let $i$  index the strictly positive $\bar u_i$  and define  $S = \{ i \, : \,  \bar u_i > 0 \}.$   By construction of  $M$  there is a subset of rows of $M$ that form a Hilbert basis for $C(S).$  By construction of $C(S),$ $\bar u A = c$ implies $c \in C(S).$  Also, since $\bar u$ is an optimal dual solution,  it must satisfy complementary slackness. That is, $\bar u_{i} > 0$ implies that  $ a^{i} \bar x = b_i.$  Therefore $S$ indexes a set of tight constraints in the system $A \bar x \ge b.$ Consider an arbitrary element  $m^{k}$ of the Hilbert basis associated with the cone $C(S)$. There is a corresponding 
  $m-$vector $u^{k}$ with support in $S$ and 
\begin{align*}
u^{k} A \bar x = \sum_{i \in S} u^{k}_{i}a^i \bar x = \sum_{i \in S} u^{k}_{i} b_i = u^{k} b = b'_{k}.
\end{align*}
Since $u^{k} A = m^{k}$ and $u^{k} b = b'_{k}$ we have 
\begin{align}\label{eq:this-is-useful}
 m^{k} \bar x = b'_{k},   \quad \forall k = k^{S}_{1},  k^{S}_{2} \ldots, k^{S}_{q_{S}}.
\end{align}
As argued above, $c \in C(S)$ and is, therefore, generated by nonnegative integer multiples of the $m^{k}$ for  $k = k^{S}_{1},  k^{S}_{2} \ldots, k^{S}_{q_{S}}.$   That is, there exist nonnegative integers $\bar y_{k}$  such that
\begin{align}\label{eq:write-out-m-k}
c = \sum_{k  =  k^{S}_{1}}^{k^{S}_{q_{S}}} \bar y_{k} m^{k}.
\end{align}

Hence there exists a nonnegative $q$-component integer vector $\bar y$ with support contained in the  set  indexed by  $k^{S}_{1},  k^{S}_{2} \ldots, k^{S}_{q_{S}}$
 such that
\begin{align}\label{eq:write-out-c}
c = \bar y M.
\end{align}
Since $\bar y  \ge 0,$  $\bar y$ is feasible to the left hand side of~\eqref{eq:tdi-1}.   We use \eqref{eq:this-is-useful} and \eqref{eq:write-out-m-k} to show 
\begin{align}\label{eq:zero-duality-gap}
\bar y b' = c \bar x,
\end{align}
which implies that $\bar y$ is an optimal integral dual solution to \eqref{eq:tdi} (since $\bar x$  and  $\bar y$ are  primal-dual feasible), implying the result. 

To show \eqref{eq:zero-duality-gap},  use the fact that the support of $\bar y$ is  contained in the  set  indexed by  $k^{S}_{1},  k^{S}_{2} \ldots, k^{S}_{q_{S}}$ which implies
\begin{eqnarray}
\bar y b' = \sum_{k  =  k^{S}_{1}}^{k^{S}_{q_{S}}} \bar y_{k} b'_{k}.
\end{eqnarray}
Then by \eqref{eq:this-is-useful}  substituting $m^{k} \bar x$ for $b'_{k}$ gives
\begin{eqnarray}
\bar y b' = \sum_{k  =  k^{S}_{1}}^{k^{S}_{q_{S}}} \bar y_{k} b'_{k} =  \sum_{k  =  k^{S}_{1}}^{k^{S}_{q_{S}}} \bar y_{k} m^{k} \bar x.
\end{eqnarray}
Then by \eqref{eq:write-out-m-k} substituting $c$ for  $\sum_{k  =  k^{S}_{1}}^{k^{S}_{q_{S}}} \bar y_{k} m^{k} $  gives 
\begin{eqnarray}
\bar y b' = \sum_{k  =  k^{S}_{1}}^{k^{S}_{q_{S}}} \bar y_{k} b'_{k} =  \sum_{k  =  k^{S}_{1}}^{k^{S}_{q_{S}}} \bar y_{k} m^{k} \bar x = c \bar x.
\end{eqnarray}

This gives  \eqref{eq:zero-duality-gap} and completes the proof.\end{proof}

\begin{remark}
When $S$ is a singleton set, i.e. $S = \{ i\}$ then the corresponding $m^{k}$ for $k = k^{S}_{1}$ may be a scaling of the corresponding $a^{i}.$  However, this does not affect our argument that~\eqref{eq:equal-poly} holds.
\end{remark}

\begin{remark}
Each of the $m^{k}$ vectors that define each Hilbert basis may be assumed to be integer.  Therefore if $A$ is an integer matrix,  $M$ is an integer matrix.  
\end{remark}

Next we will also need a series of results about the interaction of lattices and convex sets.

\begin{definition} Let $V$ be a vector space over $\R$. A {\em lattice} in $V$ is the set of all integer combinations of a linearly independent set of vectors $\{a^1, \ldots,  a^m\}$ in $V.$ The set $\{a^1, \ldots,  a^m\}$ is called the basis of the lattice. The lattice is {\em full-dimensional} if it has a basis that spans $V$.
\end{definition}

\begin{definition} Given a full-dimensional lattice $\Lambda$ in a vector space $V$, a {\em $\Lambda$-hyperplane} is an affine hyperplane $H$ in $V$ such that $H = \aff(H \cap \Lambda)$. This implies that for $V = \R^n$ for $H$ to be a $\Z^n$-hyperplane, $H$ must contain $n$ affinitely independent vectors in $\Z^n$. 
\end{definition}

\begin{definition} Let $V$ be a vector space over $\R$ 
and let $\Lambda$ be a full-dimensional lattice in $V$. Let $\mathcal{H}_{\Lambda}$ denote the set of all $\Lambda$-hyperplanes that contain the origin. 
Let $C \subseteq V$ be a convex set. Given any $H \in \mathcal{H}_{\Lambda}$, we say that the {\em $\Lambda$-width of $C$ parallel to $H$}, denoted by $\ell(C,\Lambda, V, H)$, is the total number of distinct $\Lambda$-hyperplanes parallel to $H$ that have a nonempty intersection with $C$. 
The {\em lattice-width} of $C$ with respect to $\Lambda$ is defined as $\ell(C,\Lambda, V) := \min_{H \in \mathcal{H}_{\Lambda}} \ell(C,\Lambda, V, H)$.   
\end{definition}

We will need this classical ``flatness theorem" from the geometry of numbers -- see Theorem VII.8.3 on page 317 of~\cite{barvinok}, for example. Since our language is different from Barvinok's language, we give a short proof which can be seen as a translation of Barvinok's proof.

\begin{theorem}\label{thm:flatness} Let $V\subseteq \R^n$ be a vector subspace with $\dim(V) = d$, and let $\Lambda$ be a full-dimensional lattice in $V$. Let $C \subseteq V$ be a compact, convex set. If $C \cap \Lambda = \emptyset$, then $\ell(C,\Lambda, V) \leq d^{5/2}$. 
\end{theorem}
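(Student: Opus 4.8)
The plan is to reduce to the standard lattice and then let the geometry of numbers do the work.

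\textbf{Step 1 (Reduction to $\Z^d\subseteq\R^d$).} First I would fix a basis $a^1,\dots,a^d$ of $\Lambda$; since $\Lambda$ is full-dimensional this is also a basis of $V$, so the linear map $T:V\to\R^d$ with $Ta^i=e_i$ is a bijection carrying $\Lambda$ onto $\Z^d$. Such a map sends $\Lambda$-hyperplanes to $\Z^d$-hyperplanes and preserves the number of parallel hyperplanes meeting a set, so $\ell(C,\Lambda,V)=\ell(TC,\Z^d,\R^d)$, and $C\cap\Lambda=\emptyset$ iff $TC\cap\Z^d=\emptyset$. Thus I may assume $V=\R^d$ and $\Lambda=\Z^d$. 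Moreover, since $C$ is compact and disjoint from $\Z^d$ we have $\operatorname{dist}(C,\Z^d)>0$, so a small closed neighborhood $C_\varepsilon\supseteq C$ is still compact, convex, and lattice-free, with $\ell(C,\Z^d)\le\ell(C_\varepsilon,\Z^d)$ by monotonicity; taking $C_\varepsilon$ full-dimensional lets me assume $C$ has nonempty interior.

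\textbf{Step 2 (John ellipsoid and reduction to a ball).} By John's theorem there is an ellipsoid $E$ with center $z$ such that $E\subseteq C\subseteq z+d(E-z)$. As $E\subseteq C$, the ellipsoid $E$ is lattice-free. Applying a linear isomorphism $A$ sending $E$ to the unit ball $B=B(p,1)$ carries $\Z^d$ to a lattice $L=A\Z^d$, preserving lattice widths and lattice-freeness, so it suffices to control $\ell(B,L)$ and then pay a factor coming from the dilation by $d$. For a primitive dual vector $u\in L^*$, the $L$-hyperplanes orthogonal to $u$ are the level sets $\langle u,\cdot\rangle=k$ with $k\in\Z$, and those meeting $B$ correspond to integers in an interval of length $2\|u\|$; hence at most $2\|u\|+1$ of them meet $B$, and minimizing over $u$ gives $\ell(B,L)\le 2\lambda_1(L^*)+1$, where $\lambda_1(L^*)$ is the Euclidean length of a shortest nonzero dual vector.

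\textbf{Step 3 (Transference).} Since $B$ is lattice-free, its center satisfies $\operatorname{dist}(p,L)>1$, so the covering radius obeys $\mu(L)>1$. Now I would invoke two classical facts: the covering-radius bound $\mu(L)\le\tfrac12\big(\sum_{i=1}^d\lambda_i(L)^2\big)^{1/2}\le\tfrac{\sqrt d}{2}\lambda_d(L)$ in terms of the successive minima $\lambda_1(L)\le\cdots\le\lambda_d(L)$, and Banaszczyk's transference inequality $\lambda_d(L)\,\lambda_1(L^*)\le d$. Combining, $1<\mu(L)\le\tfrac{\sqrt d}{2}\lambda_d(L)$ forces $\lambda_d(L)>2/\sqrt d$, whence $\lambda_1(L^*)\le d/\lambda_d(L)<\tfrac12 d^{3/2}$, so $\ell(E,\Z^d)=\ell(B,L)<d^{3/2}+1$. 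Finally the dilation $C\subseteq z+d(E-z)$ scales the relevant continuous width by $d$, and passing between continuous width and the number of hyperplanes costs only additive constants; carrying these through gives $\ell(C,\Z^d)\le d^{5/2}$.

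\textbf{Main obstacle.} The crux is Step 3: the hypothesis ``no lattice point'' is really a statement about the \emph{coarsest} direction of the primal lattice $L$, measured by $\lambda_d(L)$ via the covering radius, whereas ``small lattice width'' is a statement about the \emph{shortest} vector of the dual lattice $\lambda_1(L^*)$. Bridging these is exactly the content of the transference inequality, used together with the sharp $\ell^2$ covering-radius bound. A naive volume or determinant estimate does not suffice and in fact runs the wrong way, since $\mu(L)>1$ bounds $\det L$ from above rather than below; the successive-minima picture is what captures the skew that makes the argument work. The remaining effort is purely the bookkeeping of constants through the John factor and the width-versus-hyperplane-count conversion needed to land precisely at the exponent $5/2$.
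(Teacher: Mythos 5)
Your argument is correct in substance but takes a genuinely different route from the paper. The paper does not prove the flatness theorem at all: its entire proof consists of the definitional translation in your Step 1, namely identifying the $\Lambda$-hyperplanes through the origin with the hyperplanes orthogonal to vectors of the dual lattice $\Lambda^*$, after which it invokes Barvinok's Theorem VII.8.3 verbatim as a black box. You instead sketch an actual proof of the flatness theorem: reduce to $\Z^d$, inflate to a full-dimensional body, pass to the John ellipsoid and then to a unit ball, bound the hyperplane count of a lattice-free ball by $2\lambda_1(L^*)+1$, and bridge the gap between ``no lattice point'' (a statement about $\mu(L)$, hence about $\lambda_d(L)$) and ``short dual vector'' via Banaszczyk's transference inequality $\lambda_d(L)\,\lambda_1(L^*)\le d$ together with $\mu(L)\le\tfrac12\bigl(\sum_i\lambda_i(L)^2\bigr)^{1/2}$. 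Each of these steps is a correct standard fact, and the combination $d^{3/2}$ (from transference) times $d$ (from John) does explain the exponent $5/2$. What your approach buys is an explicit mechanism in place of a citation; what it costs is that Banaszczyk's theorem is itself deep, so you have traded one black box for another, plus more bookkeeping. One small caveat: the final conversion from continuous width to hyperplane count yields $\ell(C,\Z^d)\le \lfloor d^{5/2}\rfloor+1$ rather than $d^{5/2}$ on the nose, since an interval of length $w$ can contain $\lfloor w\rfloor+1$ integers; the paper is equally cavalier about this $+1$ when equating its counting definition with Barvinok's width, and the slack is harmless for the application (only some finite bound depending on $d$ is ever used downstream in Theorem~\ref{thm:schrijver-23.3}), but you should not claim that the bookkeeping lands ``precisely'' at $d^{5/2}$.
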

\begin{proof}
Let $b^1, \ldots, b^d\in V$ be a basis of $\Lambda$, and we identify $V$ with $\R^d$ using coordinates of this lattice basis. Then, the lattice $\Lambda$ becomes $\Z^d$ and the dual lattice $\Lambda^*$ is also $\Z^d$. For any $a\in \R^d$ and let $H_a$ be the linear hyperplane in $V$ orthogonal to the vector $a_1b^1 + \ldots + a_db^d \in V$. We now verify that $H\subseteq V$ is a $\Lambda$-hyperplane containing the origin if and only if there exists $a \in \Z^d$ such that $H = H_a$. 

$H$ is a $\Lambda$-hyperplane containing the origin if and only if it is the linear span of $d-1$ vectors from $\Lambda$, i.e., $d-1$ vectors from $\Z^d$ in our coordinates on $V$. This is equivalent to saying there exists a vector $a\in \Z^d$ such that $a_1b^1 + \ldots + a_db^d \in V$ is orthogonal to $H$ (one can find a rational orthogonal vector in our coordinate system and then scale it to be integer). 

Therefore, $\Lambda$-hyperplanes  in $V$ containing the origin are exactly the hyperplanes orthogonal to vectors in the dual lattice $\Lambda^*$ in $V$. Now one can see the equivalence of Theorem VII.8.3 on page 317 of~\cite{barvinok} and the statement of our theorem.
\end{proof}

We will also need a theorem about the structure of convex sets that contain no lattice points in their interior, originally stated in~\cite{Lovasz89}.

\begin{definition} A convex set $S\subseteq \R^n$ is said to be {\em lattice-free} if $\intt(S)\cap \Z^n = \emptyset$. A {\em maximal lattice-free set} is a lattice-free set that is not properly contained in another lattice-free set.
\end{definition}

\begin{theorem}\label{thm:mlfc-structure}\cite[Theorem 1.2]{bccz}[See also \cite{Lovasz89}]
A set $S\subset \R^n$ is a maximal lattice-free convex set  if and only if one of the following holds:
\begin{itemize}
\item[(i)] $S$ is a polyhedron of the form $S= P+L$ where $P$ is a polytope, $L$ is a rational linear space, $\dim(S)=\dim(P)+\dim(L)=n$, $S$ does not contain any integral point in its interior and there is an integral point in the relative interior of each facet of $S$;
\item[(ii)] $S$ is an irrational affine hyperplane of $\R^n$.
\end{itemize}

\end{theorem}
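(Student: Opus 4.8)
The statement has two directions, and I would treat the straightforward ``if'' direction first, reserving the flatness theorem (Theorem~\ref{thm:flatness}) as the main engine for the ``only if'' direction. For (ii), an irrational affine hyperplane $H = \{x : a^\top x = \beta\}$, with $a$ not a scalar multiple of a rational vector, has empty interior and is therefore trivially lattice-free; it is maximal because $\{a^\top z : z \in \Z^n\}$ is dense in $\R$ (irrationality of $a$), so any convex set strictly containing $H$ contains a full-dimensional ``half-slab'' $\conv(H \cup \{p\})$ around some boundary point, whose interior must then contain a lattice point. For (i), lattice-freeness is assumed, and maximality holds because enlarging $S$ past any facet $F$ would pull the guaranteed relative-interior integral point of $F$ into the interior of the enlarged set.

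For the ``only if'' direction, let $S$ be maximal lattice-free. First I would note $S$ is closed: when $\intt(S)\neq\emptyset$ we have $\intt(\cl S)=\intt(S)$, so $\cl S$ is lattice-free and maximality forces $S=\cl S$. In the non-full-dimensional case $\dim(\aff S)\le n-1$, the set $S$ has empty interior and is automatically lattice-free; maximality then forces $\aff(S)$ to be a hyperplane $H$ with $S=H$ (any set of dimension $\le n-2$ extends to a full hyperplane while keeping empty interior). This hyperplane must be irrational, since a rational hyperplane extends to a full-dimensional slab between two consecutive lattice hyperplanes that remains lattice-free, contradicting maximality. This yields case (ii).

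The heart of the argument is the full-dimensional case, which I would organize as follows. (a) Show $\rec(S)$ is a linear subspace $L$: a genuine recession ray that is not part of a line could be completed to a line without capturing any interior lattice point, so maximality forces the recession cone to be a subspace. (b) Show $L$ is rational and pass to the quotient: set $V=L^\perp$, so that $\Lambda:=\proj_V(\Z^n)$ is a full-dimensional lattice in $V$ (this is exactly where rationality of $L$ is used) and $\bar S:=\proj_V(S)$ is a maximal, $\Lambda$-lattice-free convex set with $\rec(\bar S)=\{0\}$, hence compact. (c) Apply Theorem~\ref{thm:flatness} to $\bar S$: compactness plus lattice-freeness bound its $\Lambda$-width by $d^{5/2}$ with $d=\dim V$, so only finitely many $\Lambda$-hyperplanes meet $\bar S$; combined with maximality this forces $\bar S$ to be a polytope $P$ with finitely many facets. (d) Show each facet of $P$ carries a lattice point in its relative interior, since otherwise its supporting inequality could be relaxed outward while preserving lattice-freeness, again contradicting maximality. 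Lifting back through $V\oplus L$ gives $S=P+L$ with the stated dimension and facet conditions, i.e.\ case (i).

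The main obstacle is precisely steps (a)--(c): establishing that $\rec(S)$ is a \emph{rational} linear space and that $S$ has only \emph{finitely many} facets. The flatness theorem is the tool that controls the ``compact directions'' by bounding the $\Lambda$-width of the $L$-quotient $\bar S$, which is what ultimately caps the number of facets and yields polyhedrality. Converting the qualitative width bound from flatness into a clean finiteness-of-facets statement, while simultaneously extracting rationality of $L$ from the relative-interior lattice points produced in step (d), is the delicate bookkeeping that carries this direction.
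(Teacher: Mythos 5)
The first thing to note is that the paper does not prove this statement at all: it is imported verbatim as Theorem~1.2 of the cited reference (Basu--Conforti--Cornu\'ejols--Zambelli, building on Lov\'asz), so there is no in-paper proof to compare against. Judged on its own, your sketch reproduces the correct high-level skeleton of the known proof --- split by dimension, handle irrational hyperplanes via density of $a^\top\Z^n$, and in the full-dimensional case reduce modulo the lineality space and invoke flatness --- and the ``if'' direction and the non-full-dimensional case are essentially complete. But the three steps you label (a)--(c) are precisely where the content of the theorem lives, and each is asserted rather than proved. In (a), the claim that a recession ray ``could be completed to a line without capturing any interior lattice point'' is a genuine lemma: one must show that if $r\in\rec(S)$ and $S$ is lattice-free then $S+\R r$ is still lattice-free, and the only known route is Dirichlet's theorem on simultaneous Diophantine approximation (any cylinder of positive radius around the ray $z+\R_+r$, $z\in\Z^n$, contains lattice points arbitrarily far out), which your convexity-only phrasing does not supply. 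In (b), rationality of $L=\lineal(S)$ is used but never derived; if $L$ is irrational then $\proj_{L^\perp}(\Z^n)$ is not discrete and the entire quotient-lattice construction collapses, so this needs its own argument (roughly: an irrational lineality space forces, again via density, that $S$ cannot be full-dimensional and lattice-free).

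The most serious gap is (c). Theorem~\ref{thm:flatness} bounds the number of lattice hyperplanes meeting $\bar S$ \emph{parallel to a single optimal direction}; it does not say that only finitely many lattice points are relevant, and ``finitely many hyperplanes meet $\bar S$, combined with maximality, forces $\bar S$ to be a polytope'' is a non sequitur as written. The actual argument establishes polyhedrality by showing that every point outside $S$ is separated by a half-space whose boundary contains a blocking lattice point, and then runs an induction on dimension along the flat direction to show finitely many such lattice points (hence finitely many facets) suffice; this induction, together with the facet condition in (d), is the theorem, not bookkeeping. You candidly flag this in your closing paragraph, which is to your credit, but it means the proposal is an outline of the BCCZ strategy with the decisive lemmas missing rather than a proof. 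Since the paper itself simply cites the result, the appropriate course here is either to cite it as the paper does or to fill in the three lemmas above.
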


We now establish our main tool.

\begin{theorem} Let $A\in \R^{m \times n}$ be a rational matrix. Then for any $b \in \R^m$ such that $P:= \{x \in \R^n : Ax \geq b\}$ satisfies $P \cap \Z^n = \emptyset$, we must have $\ell(P,\Z^n, \R^n) \leq n^{5/2}$. [Note that $P$ is not assumed to be bounded]
\end{theorem}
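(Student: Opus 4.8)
The boundedness hypothesis of Theorem~\ref{thm:flatness} is exactly what is missing here, so when $P$ is compact the claim is immediate: take $V=\R^n$, $\Lambda=\Z^n$, $d=n$. The entire difficulty is the unboundedness, and my plan is to \emph{project away all recession directions at once} and reduce to the compact case. I would set $U := \linspan(\rec(P))$, where $\rec(P)=\{x:Ax\ge 0\}$. Since $A$ is rational, $\rec(P)$ and hence $U$ are rational subspaces, so the orthogonal projection $\rho:\R^n\to U^{\perp}$ maps $\Z^n$ onto a full-dimensional lattice $\Lambda:=\rho(\Z^n)$ of $U^{\perp}$ and maps $P$ onto $Q:=\rho(P)$, whose recession cone is $\rho(\rec(P))=\{0\}$. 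Thus $Q$ is a compact polytope living in the subspace $V:=U^{\perp}$ of dimension $d:=n-\dim U\le n$.

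The main step, and the one I expect to be the crux, is to show that $Q$ stays lattice-free, i.e. $Q\cap\Lambda=\emptyset$ (not merely that its interior misses $\Lambda$, which is all one would get from a maximal lattice-free embedding). I would argue by contradiction: suppose $\rho(z)\in Q$ for some $z\in\Z^n$. Then there is $p\in P$ with $\rho(p)=\rho(z)$, so $p-z\in U$, and the fiber $F:=(z+U)\cap P$ is a nonempty polyhedron inside the affine space $z+U$ with recession cone $\rec(P)\cap U=\rec(P)$, which is \emph{full-dimensional in} $U$ by the definition of $U$. The translate $p+\rec(P)\subseteq F$ of a full-dimensional cone contains balls of arbitrarily large radius; since $U$ is rational, the coset $z+(\Z^n\cap U)$ is a full-rank lattice in $z+U$ with finite covering radius, so $F$ contains a point of $z+(\Z^n\cap U)\subseteq\Z^n$. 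That point lies in $P\cap\Z^n$, contradicting $P\cap\Z^n=\emptyset$. This is precisely the statement that projecting along the span of the recession cone manufactures no new lattice points, and it is what licenses the empty-intersection hypothesis of Theorem~\ref{thm:flatness}.

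Granting lattice-freeness, I would apply Theorem~\ref{thm:flatness} to the compact convex set $Q$ in $V=U^{\perp}$ with the full-dimensional lattice $\Lambda$, obtaining $\ell(Q,\Lambda,U^{\perp})\le d^{5/2}\le n^{5/2}$. It then remains to transfer the bound back to $P$, for which it suffices to prove $\ell(P,\Z^n,\R^n)\le\ell(Q,\Lambda,U^{\perp})$. For any $\Lambda$-hyperplane $\bar H=\{y\in U^{\perp}:a^{\top}y=0\}$ through the origin, with $a$ a primitive integer normal (legitimate since $\bar H$ is rational), the preimage $\rho^{-1}(\bar H)=\{x\in\R^n:a^{\top}x=0\}$ is a $\Z^n$-hyperplane, using that $a\in U^{\perp}$ forces $a^{\top}\rho(x)=a^{\top}x$. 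The same identity shows $\{a^{\top}x=k\}$ meets $P$ if and only if $\{a^{\top}y=k\}$ meets $Q$, so the widths parallel to $\rho^{-1}(\bar H)$ and to $\bar H$ coincide. Taking the minimum over all such $\bar H$, and noting that $\{\rho^{-1}(\bar H)\}$ is a subfamily of all $\Z^n$-hyperplanes through the origin, gives $\ell(P,\Z^n,\R^n)\le\ell(Q,\Lambda,U^{\perp})\le n^{5/2}$. This route relies only on Theorem~\ref{thm:flatness} together with the rationality of $A$, and in particular sidesteps the maximal lattice-free structure theorem (Theorem~\ref{thm:mlfc-structure}), whose case~(ii) is avoided precisely because we never pass to a maximal lattice-free superset.
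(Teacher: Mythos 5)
Your proof is correct, and it takes a genuinely different route from the paper's. The paper keeps $P$ in the ambient space: it invokes the structure theorem for maximal lattice-free convex sets (Theorem~\ref{thm:mlfc-structure}) to sandwich $P$ inside a set $C+L$ with $C$ a polytope and $L$ a rational linear space, applies Theorem~\ref{thm:flatness} to $C$, and extends the resulting $\Lambda$-hyperplanes to $\Z^n$-hyperplanes by adding $L$; the non-full-dimensional case is handled by a separate preliminary argument. You instead project along $U=\linspan(\rec(P))$ onto $U^\perp$, which is legitimate precisely because rationality of $A$ makes $U$ rational, so $\rho(\Z^n)$ is a full-dimensional lattice of $U^\perp$ and $\Z^n\cap U$ has full rank in $U$. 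Your covering-radius argument that $Q\cap\Lambda=\emptyset$ is the crux, and it supplies a verification that the paper's appeal to Theorem~\ref{thm:mlfc-structure} leaves implicit: a maximal lattice-free set has integral points in the relative interiors of its facets, so the polytope handed over by that theorem can meet the lattice on its boundary, and one still has to check that the compact piece fed to the flatness theorem is genuinely free of lattice points (the paper simply asserts $(C+L)\cap\Z^n=\emptyset$); your fiber argument does exactly this work by hand. What your route buys is economy and uniformity --- only Theorem~\ref{thm:flatness} plus elementary facts about rational subspaces and lattices, no case split on $\dim P$, and no contact with case (ii) of the structure theorem --- at the cost of constructing the decomposition $C+L$ yourself rather than citing it. The width-transfer step also differs in direction (you pull $\Lambda$-hyperplanes back through $\rho^{-1}$ via the identity $a^\top x=a^\top\rho(x)$ for $a\in\Z^n\cap U^\perp$, whereas the paper pushes them forward by adding $L$), but both are valid and yield the same bound $d^{5/2}\le n^{5/2}$.
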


\begin{proof} If $P$ is not full-dimensional, then $\aff P$ is given a system $\{ x : \tilde A x = \tilde b \}$ where the matrix $\tilde A$ is rational, since the matrix $A$ is rational and $\tilde A$ can be taken to be a submatrix of $A$. Now take a $\Z^n$-hyperplane $H$ that contains $\{ x | \tilde A x = 0 \}$. Then $\ell(P, \Z^n, \R^n, H) = 0$ or $1$, depending on whether the translate in which $P$ is contained in a $\Z^n$-hyperplane translate of $H$ or not. This immediately implies that $\ell(P,\Z^n,\R^n)$ is either $0, 1$. 

Thus, we focus on the case when $P$ is full-dimensional. By Theorem~\ref{thm:mlfc-structure}, there exists a basis $v^1, \ldots, v^n$ of $\Z^n$, a natural number $k \leq n$, and a polytope $C$ contained in the span of $v^1, \ldots, v^k$, such that $P \subseteq C + L$, where $L = span(\{v^{k+1}, \ldots, v^n\})$ and $(C + L) \cap \Z^n = \emptyset$ (the possibility of $k = n$ is allowed, in which case $L = \{0\}$). 

Define $V = span(\{v^1, \ldots, v^k\})$ and $\Lambda$ as the lattice formed by the basis $\{v^1, \ldots, v^k\}$. Since $C$ is a compact, convex set in $V$ and $C\cap \Lambda = \emptyset$, by Theorem~\ref{thm:flatness}, we must have that $\ell(C, \Lambda, V) \leq k^{5/2}$. Every $\Lambda$-hyperplane $H \subseteq V$ can be extended to a $\Z^n$-hyperplane $H' = H + L$ in $\R^n$. This shows that $\ell(C + L, \Z^n, \R^n) \leq k^{5/2} \leq n^{5/2}$. Since $P \subseteq C + L$, this gives the desired relation that $\ell(P,\Z^n, \R^n) \leq n^{5/2}$. \end{proof}

\begin{example}\label{ex:infinite-lattice-width}
If $A$ is not rational, the above result is not true. Consider the set
\begin{align*}
P := \{(x_1, x_2) : x_2 = \sqrt{2}(x_1-1/2)\}
\end{align*}
Now, $P \cap Z^2 = \emptyset$. Any $\Z^2$-hyperplane containing $(0,0)$ is the span of some integer vector. All such hyperplanes intersect $P$ in exactly one point, since the hyperplane defining $P$ has an irrational slope and so intersects every $\Z^2$-hyperplane in exactly one point. Hence, $\ell(P, \Z^2, \R^2) = \infty$ for all $H \in \mathcal H_\lambda$ and so $\ell(P, \Z^2, \R^2) = \infty$. 
\end{example}

This will help to establish bounds on the Chv\'atal rank of any lattice-free polyhedron with a rational constraint matrix. First we make the following modification of equation (6) on page 341 in~\cite{schrijver86}.

\begin{lemma}\label{lem:face-chvatal} Let $A\in \R^{m \times n}$ be a rational matrix. Let $b \in \R^m$ (not necessarily rational) and let $P:= \{x \in \R^n : Ax \geq b\}$. Let $F \subseteq P$ be a face. Then $F^{(t)} = P^{(t)} \cap F$ for any $t \in \N$.
\end{lemma}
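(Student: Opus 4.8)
The plan is to prove the statement by induction on $t$, with the base case $t=1$, i.e. $F' = P'\cap F$, carrying all of the content. For the inductive step, suppose $F^{(t)} = P^{(t)}\cap F$. Two observations make the step routine. First, $P^{(t)}$ is again a polyhedron with a \emph{rational} constraint matrix: iterating Theorems~\ref{theorem:basu} and~\ref{theorem:schrijver-23.1} expresses each Chv\'atal closure as $\{x : Mx\ge\lceil b'\rceil\}$ with $M=UA$ rational, only the right-hand side possibly being irrational. Second, $P^{(t)}\cap F$ is a face of $P^{(t)}$: if $F = \{x\in P : a_i^\top x = b_i,\ i\in I\}$, then taking $w=\sum_{i\in I}a_i$ and $\gamma=\sum_{i\in I}b_i$ gives a valid inequality $w^\top x\ge\gamma$ for $P$ (hence for $P^{(t)}\subseteq P$) whose equality set inside $P^{(t)}$ is exactly $P^{(t)}\cap F$. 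Granting the base case for the rational-matrix polyhedron $P^{(t)}$ and its face $P^{(t)}\cap F$, I get $F^{(t+1)} = (F^{(t)})' = (P^{(t)}\cap F)' = (P^{(t)})'\cap(P^{(t)}\cap F) = P^{(t+1)}\cap F$, using $P^{(t+1)}\subseteq P^{(t)}$ in the last step.

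For the base case, the inclusion $F'\subseteq P'\cap F$ is the easy half. Scaling the defining inequalities of $F$ to be integral and rounding them up shows $F'\subseteq F$, while monotonicity of the Chv\'atal closure under inclusion gives $F'\subseteq P'$: since $F\subseteq P$, for integral $w$ we have $\min_F w^\top x\ge\min_P w^\top x$, so every defining cut of $P'$ is implied on $F'$. Together these yield $F'\subseteq P'\cap F$.

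The substance is the reverse inclusion $P'\cap F\subseteq F'$, and here I would depart from Schrijver's treatment. Schrijver reduces the dual multipliers of a cut of $F$ into $[0,1)$ and then translates the floor of the right-hand side through the \emph{integral} entries of $b$; with $b$ irrational this reduction fails, since one cannot move $\lceil\cdot\rceil$ across an irrational shift. Instead I would pass to a TDI representation. By Theorem~\ref{theorem:basu} choose $U$ so that $Mx\ge b'$, with $M=UA$ integral and $b'=Ub$, is a TDI system defining $P$; by Theorem~\ref{theorem:schrijver-23.1}, $P' = \{x : Mx\ge\lceil b'\rceil\}$. Writing $F$ through the rows of this representation, $F = \{x : m_i^\top x = b'_i\ (i\in I),\ m_j^\top x\ge b'_j\ (j\notin I)\}$, I would invoke the fact that faces of TDI systems are TDI, so this very representation of $F$ is TDI. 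Applying Theorem~\ref{theorem:schrijver-23.1} to $F$, treating each equality $m_i^\top x=b'_i$ as the pair $m_i^\top x\ge b'_i$ and $-m_i^\top x\ge -b'_i$, then computes $F'$ explicitly by rounding: the equality rows force $m_i^\top x\ge\lceil b'_i\rceil$ together with $m_i^\top x\le\lfloor b'_i\rfloor$, so $F'=\emptyset$ when some $b'_i$ with $i\in I$ is non-integral, and otherwise $F' = \{x : m_i^\top x=b'_i\ (i\in I),\ m_j^\top x\ge\lceil b'_j\rceil\ (j\notin I)\}$. Computing $P'\cap F$ from $P'=\{Mx\ge\lceil b'\rceil\}$ and the description of $F$ produces \emph{exactly} the same case split, whence $F'=P'\cap F$.

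The main obstacle, and the reason for routing through the TDI machinery, is precisely the irrationality of $b$: over an arbitrary system the Chv\'atal closure is not a coordinatewise rounding and Schrijver's floor-bookkeeping breaks, whereas over a TDI system the closure \emph{is} a rounding of the right-hand side that is valid for every $b$, rational or not. The one external ingredient I would want to confirm in this generality is that faces of TDI systems stay TDI when $b'$ is irrational; this holds because the standard argument, adjusting the objective by an integer multiple of a tight row so that the maximum over $P$ is attained on the face and then reading off an integral dual from the TDI-ness of $P$, uses only integrality of $M$ and of the objective vector and never the rationality of $b'$.
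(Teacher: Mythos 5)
Your proof is correct and follows essentially the same route as the paper's: both reduce to the case $t=1$, pass to a TDI representation of $P$ so that Theorem~\ref{theorem:schrijver-23.1} computes the closure by rounding the (possibly irrational) right-hand side, and split on whether the face-defining right-hand sides are integral, with the non-integral case forcing $F' = \emptyset = P' \cap F$. The only differences are presentational: you describe $F$ via the tight rows of the TDI system and carry out the rounding computation explicitly, whereas the paper uses a single integral supporting hyperplane $\alpha x = \beta$ and defers the integral-$\beta$ case to Schrijver's lemma on page 340.
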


\begin{proof} The proof follows the proof of (6) in~\cite{schrijver86} on pages 340-341 very closely. As observed in~\cite{schrijver86}, it suffices to show that $F' = P' \cap F$. 

Without loss of generality, we may assume that the system $Ax \geq b$ is TDI (if not, then throw in valid inequalities to make the description TDI). Let $F = P \cap \{x : \alpha x = \beta\}$ for some integral $\alpha \in \R^n$. 
The system $Ax \geq b, \alpha x \geq \beta$ is also TDI, which by Theorem 22.2 in~\cite{schrijver86} implies that the system $Ax \leq b, \alpha x = \beta$ is also TDI (one verifies that the proof of Theorem 22.2 does not need rationality for the right hand side). 

Now if $\beta$ is an integer, then we proceed as in the proof of the Lemma at the bottom of page 340 in~\cite{schrijver86}. 

If $\beta$ is not an integer, then $\alpha x \geq \lceil \beta \rceil$ and $\alpha x \leq \lfloor \beta \rfloor$ are both valid for $F'$, showing that $F' = \emptyset$. By the same token, $\alpha x \geq \lceil \beta \rceil$ is valid for $P'$. But then $P' \cap F = \emptyset$ because $\lceil \beta \rceil > \beta$. \end{proof}

We now prove the following modification of Theorem 23.3 from~\cite{schrijver86}.

\begin{theorem}\label{thm:schrijver-23.3} For every $n\in \N$, there exists a number $t(n)$ such that for any rational matrix $A\in \R^{m \times n}$ and $b \in \R^m$ (not necessarily rational) such that $P:= \{x \in \R^n : Ax \geq b\}$ satisfies $P \cap \Z^n = \emptyset$, we must have $P^{(t(n))} = \emptyset$. 
\end{theorem}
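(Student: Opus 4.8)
The plan is to induct on the dimension $n$, using the lattice-width bound established above (our main tool, which gives $\ell(P,\Z^n,\R^n)\le n^{5/2}$ whenever $P$ is lattice-free with rational constraint matrix) to control how $P$ sits between parallel integer hyperplanes, and then to peel off integer ``slices'' one extreme pair at a time using the face relation of Lemma~\ref{lem:face-chvatal}. The case $n\le 1$ is immediate: a lattice-free $P\subseteq\R^1$ is contained in some open unit interval $(m,m+1)$, and since $x\ge\mu>m$ and $x\le\nu<m+1$ are valid with integral normal, one Chv\'atal round forces $x\ge m+1$ and $x\le m$, so $P'=\emptyset$ and $t(1)=1$.

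For the inductive step, assume $t(n-1)$ exists and let $P=\{x:Ax\ge b\}$ be lattice-free with $A$ rational (if $P=\emptyset$ we are done). First I would apply the lattice-width bound to get a primitive integral vector $c$ such that at most $n^{5/2}$ integer hyperplanes $\{c^\top x=k\}$ meet $P$. Completing $c$ to a basis of $\Z^n$ yields a unimodular change of variables, which preserves rationality of the constraint matrix and lattice-freeness and commutes with taking Chv\'atal closures; after it we may assume $c=e_1$. Finiteness of the width forces $x_1$ to be bounded on $P$, say $x_1\in[\mu,\nu]$, with at most $w:=\lfloor n^{5/2}\rfloor$ integers in $[\mu,\nu]$ (if there are none, then $P\subseteq\{m<x_1<m+1\}$ and one round empties it). Since $x_1\le\nu$ and $x_1\ge\mu$ are valid with integral normal $e_1$, one Chv\'atal round gives $P'\subseteq\{\alpha\le x_1\le\beta\}$ with $\alpha=\lceil\mu\rceil$, $\beta=\lfloor\nu\rfloor$ and $\beta-\alpha+1\le w$. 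Throughout I would invoke Theorem~\ref{theorem:basu} together with Theorem~\ref{theorem:schrijver-23.1} to know that the Chv\'atal closure of a rational polyhedron is again a polyhedron with rational constraint matrix (and integral right-hand side after the first round), so the induction hypothesis keeps applying to these closures and to their slices.

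The crucial observation is that the top slice $F_\beta:=P'\cap\{x_1=\beta\}$ and the bottom slice $F_\alpha:=P'\cap\{x_1=\alpha\}$ are genuine \emph{faces} of $P'$, since $x_1\le\beta$ and $x_1\ge\alpha$ are tight valid inequalities. Each such face lies in an integral hyperplane, is lattice-free, and is cut out by a rational system; identifying $\{x_1=\beta\}$ with $\R^{n-1}$ (its induced lattice being exactly $\Z^{n-1}$), the induction hypothesis empties its within-hyperplane Chv\'atal closure after $t(n-1)$ rounds. A short observation shows that any Chv\'atal cut taken within the hyperplane lifts to one in $\R^n$ by appending a zero $x_1$-coefficient, so the $\R^n$-closure of the face is at least as strong; combined with Lemma~\ref{lem:face-chvatal}, which identifies $F_\beta^{(t)}$ with $(P')^{(t)}\cap\{x_1=\beta\}$, I would conclude $(P')^{(t(n-1))}\cap\{x_1=\beta\}=\emptyset$ and likewise at $\alpha$. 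Having emptied the two extreme integer slices, $(P')^{(t(n-1))}$ lies in the \emph{open} slab $\{\alpha<x_1<\beta\}$, so one more round pushes it into $\{\alpha+1\le x_1\le\beta-1\}$, a slab with two fewer integer slices.

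This last set is again lattice-free with rational constraint matrix and its new extreme slices are faces, so I would repeat the construction. After at most $\lceil w/2\rceil$ such super-iterations, each costing $t(n-1)+1$ rounds, all integer slices are removed and the remaining polyhedron lies strictly between two consecutive integers in $x_1$; a final round empties it. This yields a finite recursion, e.g.\ $t(n)\le 1+\lceil n^{5/2}/2\rceil\,(t(n-1)+1)$, depending only on $n$. The main conceptual obstacle is exactly that the \emph{interior} integer slices of $P$ are not faces, so Lemma~\ref{lem:face-chvatal} cannot be applied to them; the resolution is to use the induction hypothesis only on the extreme slices, which \emph{are} faces, and to rely on repeated rounding---its length controlled by the lattice-width bound---to squeeze the interior slices out two at a time.
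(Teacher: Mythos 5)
Your proof is correct and follows essentially the same route as the paper's: induction on the dimension $n$, the lattice-width/flatness bound to obtain an integral direction in which $P$ meets at most $n^{5/2}$ integer hyperplanes, and Lemma~\ref{lem:face-chvatal} to convert emptiness of the $(n-1)$-dimensional closure of an extreme slice (a face) into emptiness of the corresponding slice of the iterated closure of $P$, before rounding the slab inward. The only difference is bookkeeping --- you peel two extreme slices per super-iteration where the paper peels one slice per step of its claim~\eqref{eq:slice} --- which yields an equivalent finite bound $t(n)$.
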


\begin{proof} We closely follow the proof in~\cite{schrijver86}. The proof is by induction on $n$. The base case of $n = 1$ is simple with $t(1) = 1$. Define $t(n) := n^{5/2} + 2 + (n^{5/2} + 1)t(n-1)$.

Since $P \cap \Z^n = \emptyset$, $\ell(P,\Z^n,\R^n) \leq n^{5/2}$ by Theorem~\ref{thm:flatness}, this means that there is an integral vector $c\in \R^n$ such that
 \begin{equation}\label{eq:width-bound}
 \lfloor \max_{x\in P} c^Tx\rfloor - \lceil\min_{x\in P} c^Tx \rceil \leq n^{5/2}.
 \end{equation}

Let $\delta = \lceil\min_{x\in P} c^Tx \rceil$. We claim that for each $k = 0, \ldots, n^{5/2} + 1$, we must have \begin{equation}\label{eq:slice} P^{(k+1 +k\cdot t(n-1))} \subseteq \{x : c^Tx \geq \delta + k\}.\end{equation}

For $k=0$, this follows from definition of $P'$. Suppose we know \eqref{eq:slice} holds for some $\bar k$; we want to establish it for $\bar k + 1$. So we assume $P^{(\bar k+1 +\bar k\cdot t(n-1))} \subseteq \{x : c^Tx \geq \delta + \bar k\}$. Now, since $P\cap \Z^n = \emptyset$, we also have $P^{(\bar k+1 +\bar k\cdot t(n-1))} \cap \Z^n = \emptyset$. Thus, the face $F = P^{(\bar k+1 +\bar k\cdot t(n-1))} \cap \{x : c^T x = \delta + \bar k\}$ satisfies the induction hypothesis and has dimension strictly less than $n$. By applying the induction hypothesis on $F$, we obtain that $F^{t(n-1)} = \emptyset$. By Lemma~\ref{lem:face-chvatal}, we obtain that $P^{(\bar k+1 +\bar k\cdot t(n-1) + t(n-1))} \cap \{x : c^T x = \delta + \bar k \} = \emptyset$. Thus, applying the Chvatal closure one more time, we would obtain that $P^{(\bar k+ 1 +\bar k\cdot t(n-1) + t(n-1) + 1)} \subseteq \{x : c^Tx \geq \delta + \bar k + 1) \}$. This confirms~\eqref{eq:slice} for $\bar k + 1$.

Using $k = n^{5/2} + 1$ in~\eqref{eq:slice}, we obtain that $P^{(n^{5/2}+2 +(n^{5/2} + 1)\cdot t(n-1))} \subseteq \{x : c^Tx \geq \delta + n^{5/2} + 1\}$. From~\eqref{eq:width-bound}, we know that $\max_{x\in P} c^Tx < \delta + n^{5/2} + 1$. This shows that $P^{(n^{5/2}+2 +(n^{5/2} + 1)\cdot t(n-1))} \subseteq P \subseteq \{x : c^Tx < \delta + n^{5/2} + 1\}$. This implies that $P^{(n^{5/2}+2 +(n^{5/2} + 1)\cdot t(n-1))} = \emptyset$, as desired.\end{proof}

We can now establish the following key result:

\begin{theorem}\label{theorem:schrijver-23.4}[See Schrijver ~\cite{schrijver86} Theorem 23.4]  For each rational matrix $A$ there exists a positive integer $t$ such that for every right-hand-side vector $b$ (not necessarily rational),
\begin{eqnarray}
\{ x \, : \, Ax \ge b \}^{(t)} = \{ x \, : \, Ax \ge b \}_{I}. \label{eq:t-bound}
\end{eqnarray}
\end{theorem}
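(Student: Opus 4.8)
The plan is to follow the classical induction behind Schrijver's Theorem~23.4, but carried out with the irrational-$b$ versions of the supporting results established above. Throughout write $P = \{x \in \R^n : Ax \ge b\}$ and $P_I = \{x : Ax \ge b\}_I$ for its integer hull. Since every \ch--Gomory cut is valid for the integer points of $P$, we always have $P_I \subseteq P^{(t)}$ for all $t$; hence the entire content is to produce, from $A$ alone, a single $t$ with $P^{(t)} \subseteq P_I$ valid for every (possibly irrational) $b$. I would prove this by induction on $n$, the base case $n \le 1$ being immediate.

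For the inductive step, the case $P_I = \emptyset$ (equivalently $P \cap \Z^n = \emptyset$) is handled directly by Theorem~\ref{thm:schrijver-23.3}, which gives $P^{(t(n))} = \emptyset = P_I$ with $t(n)$ depending only on $n$. When $P_I \neq \emptyset$, it suffices to show that every facet-defining inequality $c^\top x \ge d$ of $P_I$, with $c$ integral and $d = \min\{c^\top x : x \in P_I\} \in \Z$, becomes valid for $P^{(t)}$. For a fixed such $c$ I would drive the minimum up by slicing from the bottom: after one closure the minimum is at least $\lceil \min_P c^\top x\rceil$, and at the current minimum integer level $j = \min_{P^{(k)}} c^\top x < d$ the face $F_j = P^{(k)} \cap \{c^\top x = j\}$ contains no integer point (an integer point of $P$ lies in $P_I$ and so satisfies $c^\top x \ge d > j$) and has dimension strictly less than $n$. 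Applying the induction hypothesis to $F_j$ gives $F_j^{(t(n-1))} = \emptyset$, and Lemma~\ref{lem:face-chvatal} (with the integral covector $c$ and the \emph{integer} level $j$) then yields $P^{(k + t(n-1))} \cap \{c^\top x = j\} = \emptyset$; thus the minimum of $c^\top x$ over the closures rises by at least one every $t(n-1)+1$ rounds until it reaches $d$. I would emphasise that this is exactly the slicing mechanism already used in the proof of Theorem~\ref{thm:schrijver-23.3}, and that irrationality of $b$ causes no difficulty: non-integer levels never produce a face to process, precisely the $\beta \notin \Z$ case of Lemma~\ref{lem:face-chvatal}, while all covectors and levels that actually enter the argument are integral.

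The one point requiring real care --- and the main obstacle to obtaining a $t$ independent of $b$ --- is that for direction $c$ the number of slices equals the integrality gap $d - \lceil \min_P c^\top x\rceil \le \min_{P_I} c^\top x - \min_P c^\top x$, which a priori could grow with $b$. Two facts rescue uniformity, both depending on $A$ only: first, the primitive facet normals of the integer hull $\{x : Ax \ge b\}_I$ range over a finite set determined by $A$, so only finitely many directions $c$ must be treated; second, for each such $c$ the gap $\min_{P_I} c^\top x - \min_P c^\top x$ is bounded uniformly in $b$ by a quantity depending only on the subdeterminants of $A$ (a standard proximity estimate between a polyhedron and its integer hull, since the optimal LP and IP faces stay within an $A$-controlled distance, and $\|c\|$ is bounded along the finite normal set). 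Taking $t$ to be the maximum over these finitely many directions of $(\text{gap bound})\cdot(t(n-1)+1)$ plus a constant, and using that validity of an inequality is preserved under further closures, gives $c^\top x \ge d$ valid for $P^{(t)}$ simultaneously for all facets of $P_I$, hence $P^{(t)} \subseteq P_I$. The delicate part of writing this out is checking that the finite-facet-normal and proximity bounds are unaffected by irrational $b$; this localizes entirely to the already-generalized Lemma~\ref{lem:face-chvatal} and Theorem~\ref{thm:schrijver-23.3}, together with the TDI construction of Theorem~\ref{theorem:basu} and the closure formula of Theorem~\ref{theorem:schrijver-23.1}, all of which were set up to tolerate a non-rational right-hand side.
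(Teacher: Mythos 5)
Your proposal is correct and follows essentially the same route as the paper: the paper's proof simply defers to Schrijver's original argument for Theorem~23.4, substituting the irrational-$b$ version of the lattice-free result (Theorem~\ref{thm:schrijver-23.3}) and observing that the finite-facet-normal and proximity results (Schrijver's Theorems~17.4 and~17.2, which you invoke implicitly as the ``two facts'' rescuing uniformity) do not require rationality of $b$. You have in effect written out the argument the paper only cites, with the same key ingredients in the same roles.
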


\begin{proof} The proof proceeds exactly as the proof of Theorem 23.4 in~\cite{schrijver86}. The proof in~\cite{schrijver86} makes references to Theorems 17.2, 17.4 and 23.3 from~\cite{schrijver86}. Every instance of a reference to Theorem 23.3 should be replaced with a reference to Theorem~\ref{thm:schrijver-23.3} above. Theorems 17.2 and 17.4 do not need the rationality of the right hand side.
\end{proof}

We now have all the machinery we need to prove Theorem~\ref{theorem:b-j-mod}.

\begin{proof}[Proof of Theorem~\ref{theorem:b-j-mod}]
Given $A$ we can generate a nonnegative matrix $U$ using Theorem~\ref{theorem:basu} so that $UAx \ge Ub$ is TDI for all $b.$  Then by Theorem~\ref{theorem:schrijver-23.1} we get the \ch closure using the system  $UAx \ge \lceil Ub \rceil.$ Using Theorem~\ref{theorem:schrijver-23.4} we can apply this process $t$ times independent of $b$ and know we end up with $\{ x \, : \, Ax \ge b \}_{I}.$  We then apply Fourier-Motzkin elimination to this linear system and the desired $f_i$'s are obtained.
\end{proof}

\end{document}